\theoremstyle{plain}
\newtheorem{theorem}{Theorem}[section]
\newtheorem{corollary}{Corollary}[theorem]
\theoremstyle{definition}
\newtheorem{definition}{Definition} 
\theoremstyle{remark}
\newtheorem{remark}{Remark}[section]
\title[Isometry group and geodesics of the Wagner lift]{Isometry group and geodesics of the Wagner lift of a riemannian metric on two-dimensional manifold }
\author{Jos\'e Ricardo Arteaga B.}
\address{Universidad de los Andes,
Departamento de Matem\'aticas,  Cra. 2 ESTE 18A-10, Bogot\'a, Colombia}
\email{jarteaga@uniandes.edu.co}
\author{Mikhail Malakhaltsev}
\address{Universidad de los Andes,
Departamento de Matem\'aticas,  Cra. 2 ESTE 18A-10, Bogot\'a, Colombia;
Kazan State University,
Department of Mathematics, Kremlevskaya, 8, Kazan, Russia}
\email{mikarm@gmail.com}
\subjclass{53B20, 53B21, 53C17}
\keywords{Lift of a metric tensor, fiber bundles, Wagner lift, sub-riemannian metrics,
metric with singularities, isometry group, geodesics, geodesic simulation}
\def\MMR#1{\relax}
\begin{document}
\maketitle

\begin{abstract}
In this paper we construct a functor from the category of two-dimensional Riemannian manifolds to the category of three-dimensional manifolds with generalized metric tensors.  For each two-dimensional oriented Riemannian manifold $(M,g)$ we construct a metric tensor $\hat g$ (in general, with singularities) on the total space $SO(M,g)$ of the principal bundle of the positively oriented orthonormal frames on $M$. We call the metric $\hat g$ the Wagner lift of $g$.  We study the relation between the isometry groups of $(M,g)$ and $(SO(M,g),\hat g)$. We prove that the projections of the geodesics of $(SO(M,g),\hat g)$ onto $M$ are the curves which satisfy the equation  
\begin{equation*}
\nabla_{\frac{d\gamma}{dt}}\frac{d\gamma}{dt} = C K J (\dot\gamma) - C^2 K grad K, 
\end{equation*} 
where $K$ is the curvature of $(M,g)$, $J$ is the operator of the complex structure associated with $g$, and $C$ is a constant. We find the properties of the solutions of this equation, in particular, for the case when $(M,g)$ is a surface of revolution.  
\end{abstract}

\section{Introduction}

A sub-Riemannian manifold is a triple $(Q,\mathcal{H}, g_{\mathcal{H}})$, where $Q$ is a manifold and $\mathcal{H}$ is a distribution on $Q$ endowed by a metric tensor $g_{\mathcal{H}}$ \cite{Montgomery}. 
One of the first geometers who systematically studied sub-Riemannian manifolds was V.V. Wagner \cite{Wagner}. 
For a sub-Riemannian manifold $(Q,\mathcal{H},g_{\mathcal{H}})$  he defined a connection $\nabla^{\mathcal{H}}$ on $Q$ which determines a parallel translation of vectors belonging to the distribution $\mathcal{H}$ along the curves tangent to $\mathcal{H}$ (a ``partial'' connection) compatible with the metric $g_{\mathcal{H}}$  and subordinate to a ``torsion-free'' condition. 
V.\,Wagner also constructed a series of tensors which all together play the role of the curvature tensor of a linear connection. This means that  the parallel translation with respect to the connection $\nabla^{\mathcal{H}}$ does not depend on the path if and only if all these curvature tensors vanish. 
 One of the main steps of V.\,Wagner's construction is an extension of the metric $g_{\mathcal{H}}$ given on $\mathcal{H}$ to the first prolongation $\mathcal{H}^1$ of $\mathcal{H}$, where $\mathcal{H}^1$ is the distribution generated by the Lie brackets of vector fields tangent to $\mathcal{H}$. 
This extension is of interest in itself and, as we will show, gives possibility to construct a natural lift of a Riemannian metric $g$ on a two-dimensional manifold $M$ to its orthonormal frame bundle (in fact, to construct a functor from the category of two-dimensional Riemannian manifolds to the category of three-dimensional Riemannian manifolds). It turns out that this lift is closely related to many classical constructions in the geometry of fiber bundles (see \cite{BN}, \cite{Shapukov1}, \cite{Shapukov2}), and to special sub-Riemannian manifolds considered in (\cite{Montgomery}, Ch. 11).    

Let us describe V.\,Wagner's extension of metric in the case we will use: $\dim Q = 3$ and $\dim \mathcal{H}=2$.
Assume that on a three-dimensional manifold $Q$ there is given a two-dimensional distribution $\mathcal{H}$ with a metric tensor $g_{\mathcal{H}}$. 
In addition, assume that the complementary distribution $\mathcal{N}$ is given on $Q$ 
(i.\,e. $TQ = \mathcal{H}\oplus \mathcal{N}$). 
V.\,Wagner introduces the nonholonomity  tensor $N : \mathcal{H} \times \mathcal{H} \to \mathcal{N}$, $N(X,Y) = pr_\mathcal{N}([X,Y])$, where $pr_\mathcal{N}$ is the projector onto $\mathcal{N}$ with respect to $\mathcal{H}$. 
The distribution $\mathcal{H}$ is said to be completely nonholonomic if the map $N$ is surjective, or equivalently the first prolongation $\mathcal{H}^1$ coincides with $TQ$. 
For a completely non-holonomic $\mathcal{H}$, one can extend the metric $g_{\mathcal{H}}$ onto $\mathcal{H}^1 = TQ$ in the following way. 
Take an orthonormal frame $E_1, E_2$ of $\mathcal{H}$ with respect to $g_{\mathcal{H}}$, and set $E_3 = N(E_1,E_2)$. Then, the extended metric $g^1_{\mathcal{H}}$ on $TQ$ is uniquely determined by the condition that $\{E_1, E_2,E_3\}$ is an orthonormal frame of $TQ$ with respect to $g^1_{\mathcal{H}}$. 

In \cite{Arteaga-Malakhaltsev} we  applied V.\,Wagner's metric extension to the following specific case. 
Let us consider a $2$-dimensional oriented Riemannian manifold $(M,g)$, and denote by $SO(M,g) \to M$ the $SO(2)$-principal bundle  of positively oriented orthonormal frames of $(M,g)$. 
The metric $g$ determines the Levi-Civita connection on $M$ which, in turn, determines the horizontal distribution $\mathcal{H}$ on $SO(M,g)$. 
We take the lift $g_\mathcal{H}$ of $g$ to $\mathcal{H}$ and thus obtain a sub-Riemannian manifold $(SO(M,g),\mathcal{H},g_\mathcal{H})$. 
For the complementary distribution $\mathcal{N}$ of $\mathcal{H}$ we take the vertical distribution (the distribution tangent to fibers of the bundle $SO(M,g) \to M$).  
And thus, using V.\,Wagner's construction, we get a metric $\hat g$ on $SO(M,g)$ as the extension of $g_{\mathcal{H}}$, and call $\hat g$ the \textit{Wagner lift} of the metric $g$ on $M$. 
In \cite{Arteaga-Malakhaltsev} we calculated the curvature tensor of $(SO(M,g),\hat g)$, found the differential equations for the projections of geodesics of  $(SO(M,g),\hat g)$ onto $M$ (see \eqref{eq:2_28} of the present paper), and described the Wagner lift for the metrics $g$ of constant curvature $K \ne 0$. 
It is worth noting that V.\,Wagner's metric extension can be applied only to completely nonholonomic distributions, and in our case the distribution $\mathcal{H}$ is completely nonholonomic if and only if the curvature $K$ of $(M,g)$ does not vanish. Therefore, in \cite{Arteaga-Malakhaltsev} we assumed that the curvature $K \ne 0$. 

In the present paper we consider the Wagner lift $\hat g$ of a metric $g$ on a two-dimensional oriented manifold $M$ under the assumption that the curvature of $g$ can vanish along a one-dimensional submanifold $\Sigma \subset M$. 
In this case, the metric tensor $\hat g$ is a \emph{metric tensor with singularities} on $SO(M,g)$. This means that $\hat{g}$ is not defined at the points of the surface $\hat{\Sigma}=\pi^{-1}(\Sigma) \subset SO(M,g)$, however there is  a tensor $\hat g^*$ which is defined everywhere on $SO(M,g)$ and $\hat g^*$ is dual to $\hat g$ wherever $\hat g$ is defined (certainly $\hat{g}^*$ is degenerate on $\hat{\Sigma}$).  

The paper is organized as follows. In subsection~\ref{subsec:2_4} we present the precise definition of the Wagner lift, subsection~\ref{subsec:2_6}  contains formulas expressing the basic geometrical objects of the metric $\hat{g}$ on $SO(M,g)$ in terms of the metric $g$ on $(M,g)$ (without proof, for the proofs see \cite{Arteaga-Malakhaltsev}). 
In section~\ref{sec:3} we find relation between the isometry group of $(M,g)$ and the isometry group of $(SO(M,g),\hat g)$. 
In section~\ref{sec:4} we study geodesics of $(SO(M,g), \hat g)$ and their projections onto $M$. Namely, in subsection~\ref{subsec:4-1} we investigate in details the correspondence between the geodesics of $(SO(M,g), \hat g)$ and their projections (Theorem~\ref{thm:3_3}) and describe  the behavior of the geodesics near the surface $\hat{\Sigma}$ (Theorem~\ref{prop:4-1}). 
In subsection~\ref{subsec:4-2} we describe geodesics of  $(SO(M,g), \hat g)$ and their projections onto $M$ from the point of view of the theory of geodesic modeling developed in \cite{ShapiroIgoshinYakovlev}, \cite{Yakovlev}, \cite{Igoshin1}, and present a Lagrangian for the projections of geodesics which are solutions to the differential equation system~\eqref{eq:2_28}.
In subsection~\ref{subsec:4-3} we study solutions of   \eqref{eq:2_28} in case $(M,g)$ is a surface of revolution, find two first integrals of the system, and formulate some properties of solutions. Several pictures of solution behavior on the torus are included (to do it we used SAGE www.sagemath.org).

Finally we note that  the Wagner lift can be constructed just in the same way without assumption of orientability of $M$, in this case we will have a metric $\hat g$ (with singularities) on the orthonormal bundle frame and similar results hold true.

\section{Wagner lift of a metric $g$ on $M$ to $\hat{g}$ on $SO(M,g)$}

\subsection{Preliminaries} 
Let $M$ be a $2$-dimensional oriented manifold and $g$ be a metric tensor on $M$.
We denote by $\pi : SO(M,g) \to M$ the principal $SO(2)$-bundle of positively oriented orthonormal frames of $(M,g)$. In what follows, for the elements of $SO(2)$ we use the notation 
\begin{equation}
r(\varphi) = 
\left(
\begin{array}{cc}
\cos \varphi & -\sin \varphi
\\
\sin \varphi & \cos \varphi
\end{array}
\right). \notag
\end{equation}

The Levi-Civita connection $\nabla$ of $g$ determines an infinitesimal connection $\mathcal{H}$ in the bundle $SO(M,g) \to M$, that is an $SO(2)$-invariant distribution on the total space $SO(M,g)$ which is complementary to the fibers of the bundle. In what follows we use the results and terminology of the theory of connections in fiber bundles (see, e.\,g., (\cite{KN}, Ch.I--III)). 

Let $U \subset M$ be an open set such that there exists a section  $s(x) = \{e_1(x),e_2(x)\}$, $x \in U$, of the bundle $\pi : SO(M,g) \to M$.  The section $s$ determines  a trivialization 
\begin{equation}
\psi: U \times SO(2) \to \pi^{-1}(U), \quad (x, r(\varphi)) \mapsto s(x) r(\varphi).
\label{eq:1}
\end{equation}
In terms of the trivialization \eqref{eq:1} the connection form $\omega$ on $SO(M,g)$ determined by the Levi-Civita connection $\nabla$ is written as follows:
\begin{equation}
\omega_{(x,\varphi)} = r(\varphi)^{-1}
 \left(\frac{dr(\varphi)}{d\varphi}d\varphi + \Gamma_a(x) r(\varphi)\hat\theta^a\right).
\label{eq:3_4}
\end{equation}
Here $\hat\theta^a = \pi^*\theta^a$, $a=1,2$, where $\{\theta^a\}$ is the coframe field on $U$ dual to $\{e_a\}$; $\Gamma_a = \Gamma_a(x)$ is a 1-form on $U$ with values in the Lie algebra $\mathfrak{o}(2)$:
\begin{equation*}
\Gamma_{a}=
\left(
\begin{array}{cc}
0 & \Gamma_{a2}^{\hphantom{1} 1}\MMR{\Gamma_{a2}^{\hphantom{1} 1}}\\
\Gamma_{a1}^{\hphantom{1} 2} \MMR{\Gamma_{a1}^{\hphantom{1} 2}}& 0
\end{array}
\right), \notag
\label{eq:1000}
\end{equation*}
where $\Gamma_{ab}^{\hphantom{1}c}$ are the coefficients of $\nabla$ with respect to the orthonormal frame field $\{e_a\}$. The matrix $\Gamma_a = ||\Gamma_{ab}^{\hphantom{1}c}||$ is antisymmetric  
($\Gamma_{ab}^{\hphantom{1}c}=-\Gamma_{ac}^{\hphantom{1}b})$ because $\{e_a\}$ is an orthonormal frame field and $\nabla$ is the Levi-Civita connection. 

As $SO(2)$ is a commutative group we have that
$r(\varphi)^{-1} \Gamma_a r(\varphi)= \Gamma_a$. 
In addition,  $\frac{dr}{d\varphi} = r(\varphi+\pi/2)$, so 
\begin{equation}
r^{-1}(\varphi) \frac{dr}{d\varphi} = r(\pi/2) = 
J = 
\left(
\begin{array}{cc}
0 & -1
\\
1 & 0
\end{array}
\right). \notag
\end{equation}

Now we can rewrite \eqref{eq:3_4} as follows: 
\begin{equation}
\omega = J d\varphi + \Gamma_a \hat\theta^a, \notag
\end{equation}
or, in the matrix form, as  
\begin{equation}
\omega = 
\left(
\begin{array}{cc}
0 & -\alpha
\\
\alpha & 0
\end{array}
\right)
= 
\left(
\begin{array}{cc}
0 & -d\varphi 
\\
d\varphi & 0
\end{array}
\right)
+
\left(
\begin{array}{cc}
0 & \Gamma_{12}^{\hphantom{1}1}\hat\theta^1
\\
\Gamma_{11}^{\hphantom{1}2} \hat\theta^1 & 0
\end{array}
\right)
+
\left(
\begin{array}{cc}
0 & \Gamma_{22}^{\hphantom{1}1}\hat\theta^2
\\
\Gamma_{21}^{\hphantom{1}2}\hat\theta^2 & 0
\end{array}
\right), \notag
\end{equation}
hence  
\begin{equation*}
\alpha = d\varphi + \Gamma_{11}^{\hphantom{1}2} \hat\theta^{1} + \Gamma_{21}^{\hphantom{1}2}\hat\theta^{2} = d\varphi - \Gamma_{12}^{\hphantom{1}1} \hat\theta^{1} - \Gamma_{22}^{\hphantom{1}1}\hat\theta^{2}. 
\label{eq:3_5}
\end{equation*}
This implies that the horizontal lifts $E^h_1$, $E^h_2$ of the vector fields  $e_{1}$, $e_{2}$ can be written as follows:
\begin{equation}
E^h_1(x,\varphi) = e_1 - \Gamma_{11}^{\hphantom{1}2} \partial_\varphi,\quad 
E^h_2(x,\varphi) = e_2 - \Gamma_{21}^{\hphantom{1}2} \partial_\varphi.
\label{eq:3_6}
\end{equation}
Note also that, for a matrix
$A = \left(
\begin{array}{cc}
0 & -m
\\
m & 0
\end{array}
\right) \in \mathfrak{o}(2)$, $m\in \mathbf{R}$, 
the fundamental vector field $\sigma( A ) = m \partial_\varphi$.

\subsection{Expressions via structure functions}
For an orthonormal frame field  $\{e_1,e_2\}$ one can define the \emph{structure functions}  $c^k_{ij}$:
\begin{equation}
[e_i,e_j] = c^{\hphantom{1}k}_{ij} e_k. \notag
\end{equation}
From the well-known formula (see, e.\,g., \cite{KN}, p.~160): 
\begin{eqnarray*}\label{eq:3_7}
g(\nabla_X Y,Z) &=& \frac{1}{2}(X g(Y,Z) + Y g(X,Z) - Z g(X,Y)\nonumber\\ 
						&+& g([X,Y],Z) + g([Z,X],Y) - g([Y,Z],X)).
\end{eqnarray*}
we get that that the coefficients of $\nabla$ with respect to the orthonormal frame $\{e_i\}$ can be written as follows: 
\begin{equation*}
\Gamma^{\hphantom{1}k}_{ij} = \frac{1}{2}(c^{\hphantom{1}k}_{ij} + c^{\hphantom{1}j}_{ki} + c^{\hphantom{1}i}_{kj}).
\label{eq:3_8}
\end{equation*}
For the curvature tensor 
\begin{equation*}\label{eq:3_9}
R(X,Y)Z = \nabla_{X}\nabla_{Y}Z - \nabla_{Y}\nabla_{X}Z - \nabla_{[X,Y]}Z
\end{equation*}
of the connection $\nabla$ we have
\begin{equation}\label{eq:3_11}
R_{ijk}^l = e_i\Gamma^{\hphantom{1}l}_{jk} - e_j\Gamma^{\hphantom{1}l}_{ik} + \Gamma^{\hphantom{1}l}_{is}\Gamma^{\hphantom{1}s}_{jk} 
- \Gamma^{\hphantom{1}l}_{js}\Gamma^{\hphantom{1}s}_{ik} - c^{\hphantom{1}s}_{ij} \Gamma^{\hphantom{1}l}_{sk},
\end{equation}
this gives us the components $R_{ijk}^l$  in terms of $c^{\hphantom{1}k}_{ij}$.

Note also that   \eqref{eq:3_6} can be rewritten in terms of the structure functions as follows:
\begin{equation}\label{eq:2_13}
E^h_1(x,\varphi) = e_1 + c^{\hphantom{1}1}_{12}(x) \partial_\varphi,
\quad
E^h_2(x,\varphi) = e_2 + c^{\hphantom{1}2}_{12}(x) \partial_\varphi.
\end{equation} 

Let us apply V.\,Wagner's construction of nonholohomity tensor to the distribution $\mathcal{H}$. For the complementary distribution $\mathcal{N}$ we take the distribution $V$ of vertical subspaces of the bundle $SO(M,g) \to M$, i.\,e. the distribution of the tangent spaces to the fibers of the bundle. 
Then the \emph{nonholonomity tensor} $ N : \Lambda^{2}(\mathcal{H}) \to V$, where $\Lambda^{2}(\mathcal{H})$ is the space of bivectors on the distribution $\mathcal{H}$, is defined as follows: 
\begin{equation}\label{eq:3_13}
N(X,Y)_p = pr_V ([\tilde X, \tilde Y]_p), \quad p \in SO(M,g),
\end{equation}
where $pr_V$ is the projection onto $V$ parallel to $\mathcal{H}$, and  $\tilde X$, $\tilde Y$ are vector fields such that $\tilde X(p) = X$ and $\tilde Y(p)=Y$. 

Let $e=\left\{e_{1}, e_{2}\right\}$ be an orthonormal frame field on the base $M$, and $E_{1}^{h}$, $E_{2}^{h}$ be the horizontal lifts of $e_1$, $e_2$, respectively. Then 
\begin{equation}
\label{eq:3_16}
N(E^h_1(x,\varphi),E^h_2(x,\varphi)) =  K(x) \partial_\varphi,
\end{equation} 
where $K(x)$ is the curvature of $(M,g)$ at $x \in M$ (see \cite{Arteaga-Malakhaltsev}, Prop.~3).

\subsection{Definition of the Wagner lift of metric}
\label{subsec:2_4}
Recall that we consider a two-dimensional oriented Riemannian manifold $(M,g)$. 
Denote by $\Sigma$ the set of points $x \in M$ such that $K(x)=0$, and let $\hat \Sigma = \pi^{-1}(\Sigma)$. Note that $M \setminus \Sigma$ is an open subset of $M$, and $SO(M,g) \setminus \hat\Sigma$ is an open subset of $SO(M,g)$. 

At each point $p \in SO(M,g)$ such that $\pi(p) \in M \setminus \Sigma$, we will construct  a metric tensor $\hat g$ on $T_p SO(M,g)$.  We have $T_p SO(M,g)=\mathcal{H}_p \oplus V_p$, hence it is sufficient to define $\hat g$ on the horizontal and vertical vectors. Note also that the vector field $\partial_\varphi$ is globally defined and spans the vertical distribution $V$, therefore the value $\hat g(\partial_\varphi,\partial_\varphi)$ determines the values of $\hat g(X,Y)$ for arbitrary vertical vectors $X$, $Y$. 
\begin{definition}\label{def:4_1}
For each point $p \in SO(M,g)$ such that $K(\pi(p))\ne 0$, 
\begin{eqnarray}
\hat g(X,Y)_p &=& g(d\pi(X),d\pi(Y)), \quad \forall X,Y \in \mathcal{H}_p,  
\label{eq:4_1} 
\\
\hat g(X,Y)_p &=& 0 \quad \forall X \in \mathcal{H}_p, Y \in V_p, 
\label{eq:4_2}
\\
\hat g(\partial_\varphi,\partial_\varphi)_p &=& 1/K^2(\pi(p)). 
\label{eq:4_3}
\end{eqnarray} 

The tensor field $p \to \hat g_p$ defined on the open subset $SO(M,g) \setminus \hat \Sigma$ is called the \emph{Wagner lift} of the metric tensor field $g$. Obviously, $\hat g$ is a metric tensor field on  $SO(M,g) \setminus \hat \Sigma$. 
\end{definition}

\begin{remark}
The definition of values of $\hat g$ on the vertical vector fields follows from V.\,Wagner's construction of metric extension (see Introduction and the formula \eqref{eq:3_16}). 
\end{remark}

Let $U \subset M \setminus \Sigma$ be an open set, and let $\{e_1,e_2\}$ be a positively oriented orthonormal frame field on $U$.
From Definition~\ref{def:4_1} it follows that, for the frame field $\{E^h_1, E^h_2, \partial_\varphi\}$ defined on $\pi^{-1}(U)$, 
we have
\begin{eqnarray}
\label{eq:4_50}
\hat g(E^h_a,E^h_b) &=& g(e_a,e_b)=\delta_{ab},\quad a,b=\overline{1,2}, \notag
\\
\hat g(E^h_a,\partial_\varphi) &=& 0, 
\\
\hat g(\partial_\varphi,\partial_\varphi) &=& 1/K^2. \notag
\end{eqnarray} 

We will call the set $\hat \Sigma$ the \emph{singular set} of $\hat g$. In general, the set $\Sigma$ can be an almost arbitrary closed set, but we will consider only the case of ``general position'' when \emph{$\Sigma$ is a curve in $M$, and so $\hat \Sigma$ is a surface in $SO(M,g)$}.

Let us consider now the tensor $\hat g^*$ on $SO(M,g) \setminus \hat \Sigma$ dual to $\hat g^*$. From \eqref{eq:4_50} it follows that with respect to the frame field $\{E_1^h, E^2_h, \partial_\varphi\}$ the matrix of $\hat g$ can be written as follows:
\begin{equation*}
( \hat g) = 
\left(
\begin{array}{ccc}
1 & 0 & 0
\\
0 & 1 & 0
\\
0 & 0 & 1/K^2
\end{array}
\right),
\label{eq:4_51}
\end{equation*}
so the matrix of the dual tensor $\hat g^*$ is  
\begin{equation*}
( \hat g^* ) = 
\left(
\begin{array}{ccc}
1 & 0 & 0
\\
0 & 1 & 0
\\
0 & 0 & K^2
\end{array}
\right).
\label{eq:4_52}
\end{equation*}
From this follows that the tensor $\hat g^*$ can be extended smoothly to the entire manifold $SO(M,g)$ and one can easily see that this extension does not depend on a choice of the frame field $\{e_1,e_2\}$. Thus on the manifold $SO(M,g)$ we have a tensor field $\hat g^*$ of type $(0,2)$ which is the dual tensor field to the Wagner lift $\hat g$ on $SO(M,g) \setminus \hat\Sigma$ and is degenerate at $\hat\Sigma$.  

This gives rise to the following definition: \emph{a metric with singularities on a manifold $Q$ is a symmetric tensor field $h^*$ of type $(0,2)$ on $Q$, which is nondegenerate at $Q \setminus S$, where $S$ is a closed submanifold}. Certainly, the symmetric tensor $h$ of type $(2,0)$ dual to $h^*$ on $Q \setminus S$ determines a Riemannian metric on $Q \setminus S$.

\subsection{Relation between basic geometrical objects  of $(M,g)$ and $(SO(M,g),\hat g)$}
\label{subsec:2_6}
In this subsection we summarize the results concerning the relation between basic geometrical objects of $(M,g)$ and $(SO(M,g),\hat g)$, which were proved in \cite{Arteaga-Malakhaltsev}, and which we will use throughout this paper. 

Let $\{e_{1}(x), e_{2}(x)\}$ be an orthonormal frame defined on an open set $U\subset M \setminus \Sigma$. On $\pi^{-1}(U) \subset SO(M,g)$ we define the frame $\{\mathcal{E}_1,\mathcal{E}_2,\mathcal{E}_3\}$, which is orthonormal with respect to $\hat g$, as follows:
\begin{equation}
\begin{array}{l}\label{eq:4_18}
\mathcal{E}_1(x,\varphi) = E^h_1(x,\varphi) = e_1(x) + c^{\hphantom{1}1}_{12}(x) \partial_\varphi, \\
\mathcal{E}_2(x,\varphi) = E^h_2(x,\varphi) = e_2(x) + c^{\hphantom{1}2}_{12}(x) \partial_\varphi, \\
\mathcal{E}_3(x,\varphi) = K(x)\partial_\varphi,
\end{array}
\end{equation}
where $c^{\hphantom{1}c}_{ab}$ are the structure functions of the frame field $\{e_{1}(x), e_{2}(x)\}$ on $M$, and  $K(x)$ is the curvature at $x\in M$ of $(M,g)$.  

\begin{theorem}[\cite{Arteaga-Malakhaltsev}]\label{prop:5.1}
The structure functions $\hat{c}_{ij}^{\hphantom{1}k}$ of $\{\mathcal{E}_1,\mathcal{E}_2,\mathcal{E}_3\}$, $i,j,k=\overline{1,3}$,
and the structure functions $c_{ab}^{\hphantom{1}c}$ of $\{e_1, e_2\}$, $a,b,c=\overline{1,2}$ satisfy 
\begin{equation}
\begin{array}{lll}\label{eq:5_19}
\hat c^{\hphantom{1}1}_{12} = c^{\hphantom{1}1}_{12} & \hat c^{\hphantom{1}1}_{13} = 0 & \hat c^{\hphantom{1}1}_{23} = 0  
\\
\hat c^{\hphantom{1}2}_{12} = c^{\hphantom{1}2}_{12} & \hat c^{\hphantom{1}2}_{13} = 0 & \hat c^{\hphantom{1}2}_{23} = 0  
\\
\hat c^{\hphantom{1}3}_{12} = 1 & \hat c^{\hphantom{1}3}_{13} = \frac{e_1 K}{K} & \hat c^{\hphantom{1}3}_{23} = \frac{e_2 K}{K}  
\end{array}
\end{equation}
\end{theorem}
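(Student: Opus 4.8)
The plan is to obtain all nine structure functions by computing the three brackets $[\mathcal{E}_1,\mathcal{E}_2]$, $[\mathcal{E}_1,\mathcal{E}_3]$, $[\mathcal{E}_2,\mathcal{E}_3]$ from the explicit expressions \eqref{eq:4_18} and re-expressing the results in the frame $\{\mathcal{E}_1,\mathcal{E}_2,\mathcal{E}_3\}$. Two facts will be used throughout: first, the frame $e_1,e_2$ is pulled back from $M$ through the trivialization \eqref{eq:1}, so that $[e_a,\partial_\varphi]=0$; second, the functions $c^{\,1}_{12}$, $c^{\,2}_{12}$ and $K$ depend only on the base point $x$, hence $\partial_\varphi$ annihilates them. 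Together with the inversions $e_a=\mathcal{E}_a-c^{\,a}_{12}\partial_\varphi$ and $\partial_\varphi=K^{-1}\mathcal{E}_3$, these let one pass freely between the two frames.

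For the two mixed brackets I would compute directly. Since $\mathcal{E}_3=K\partial_\varphi$ with $K=K(x)$, for $a=1,2$
\begin{equation*}
[\mathcal{E}_a,\mathcal{E}_3]=\bigl[e_a+c^{\,a}_{12}\partial_\varphi,\,K\partial_\varphi\bigr]=(e_aK)\partial_\varphi=\frac{e_aK}{K}\,\mathcal{E}_3,
\end{equation*}
because the terms involving $[e_a,\partial_\varphi]$, $\partial_\varphi K$ and $\partial_\varphi c^{\,a}_{12}$ all vanish. This reads off at once the whole third column of \eqref{eq:5_19}: $\hat c^{\,1}_{a3}=\hat c^{\,2}_{a3}=0$ and $\hat c^{\,3}_{a3}=e_aK/K$.

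For the remaining bracket I would avoid a head-on expansion and instead use the nonholonomy tensor. Since $\mathcal{E}_1=E^h_1$ and $\mathcal{E}_2=E^h_2$ are the horizontal lifts of $e_1,e_2$, the horizontal component of $[\mathcal{E}_1,\mathcal{E}_2]$ is the horizontal lift of $[e_1,e_2]=c^{\,1}_{12}e_1+c^{\,2}_{12}e_2$, namely $c^{\,1}_{12}\mathcal{E}_1+c^{\,2}_{12}\mathcal{E}_2$, while its vertical component is exactly $N(\mathcal{E}_1,\mathcal{E}_2)=K\partial_\varphi=\mathcal{E}_3$ by \eqref{eq:3_16}. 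Hence
\begin{equation*}
[\mathcal{E}_1,\mathcal{E}_2]=c^{\,1}_{12}\mathcal{E}_1+c^{\,2}_{12}\mathcal{E}_2+\mathcal{E}_3,
\end{equation*}
which gives $\hat c^{\,1}_{12}=c^{\,1}_{12}$, $\hat c^{\,2}_{12}=c^{\,2}_{12}$ and, in particular, $\hat c^{\,3}_{12}=1$, completing the table.

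In this route the only non-routine ingredient is \eqref{eq:3_16}, which is already available, so there is essentially no obstacle. The main difficulty surfaces only if one insists on expanding $[\mathcal{E}_1,\mathcal{E}_2]$ by brute force: that produces a vertical coefficient $K^{-1}\bigl(e_1c^{\,2}_{12}-e_2c^{\,1}_{12}-(c^{\,1}_{12})^2-(c^{\,2}_{12})^2\bigr)$, and one must then recognize the two-dimensional identity $K=e_1c^{\,2}_{12}-e_2c^{\,1}_{12}-(c^{\,1}_{12})^2-(c^{\,2}_{12})^2$. I would verify this identity by inserting the orthonormal-frame Levi-Civita coefficients $\Gamma^{\,1}_{12}=c^{\,1}_{12}$, $\Gamma^{\,1}_{22}=c^{\,2}_{12}$ (together with their skew partners $\Gamma^{\,2}_{11}=-c^{\,1}_{12}$, $\Gamma^{\,2}_{21}=-c^{\,2}_{12}$), which are forced by the antisymmetry $\Gamma^{\,c}_{ab}=-\Gamma^{\,b}_{ac}$, into the curvature formula \eqref{eq:3_11} for the component $R^{1}_{122}=K$; the quadratic terms then collapse to $-(c^{\,1}_{12})^2-(c^{\,2}_{12})^2$ and the first-derivative terms to $e_1c^{\,2}_{12}-e_2c^{\,1}_{12}$, which yields the identity and reconfirms $\hat c^{\,3}_{12}=1$.
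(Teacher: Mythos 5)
Your proof is correct. Note, however, that the paper itself gives no proof of this theorem: it is quoted from \cite{Arteaga-Malakhaltsev} and subsection~\ref{subsec:2_6} explicitly defers all proofs to that reference, so there is no in-paper argument to compare yours against. Judged on its own, your computation is sound: the bracket $[\mathcal{E}_a,\mathcal{E}_3]=(e_aK)\partial_\varphi=\frac{e_aK}{K}\mathcal{E}_3$ follows exactly as you say once one observes that in the trivialization the fields $e_a$ commute with $\partial_\varphi$ and that $K$, $c^{\hphantom{1}a}_{12}$ are functions of the base point only; and your treatment of $[\mathcal{E}_1,\mathcal{E}_2]$ via the splitting into the horizontal lift of $[e_1,e_2]$ plus the vertical part $N(E^h_1,E^h_2)=K\partial_\varphi=\mathcal{E}_3$ from \eqref{eq:3_16} is the cleanest route, since \eqref{eq:3_16} is exactly the statement that absorbs the curvature identity. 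Your fallback verification is also right: a direct expansion gives the vertical coefficient $e_1c^{\hphantom{1}2}_{12}-e_2c^{\hphantom{1}1}_{12}-(c^{\hphantom{1}1}_{12})^2-(c^{\hphantom{1}2}_{12})^2$, and substituting $\Gamma^{\hphantom{1}1}_{12}=c^{\hphantom{1}1}_{12}$, $\Gamma^{\hphantom{1}1}_{22}=c^{\hphantom{1}2}_{12}$ (all other independent coefficients vanishing) into \eqref{eq:3_11} for $K=R^1_{122}$ does reproduce exactly this expression, confirming $\hat c^{\hphantom{1}3}_{12}=1$. The only caveat is circularity management: \eqref{eq:3_16} is itself cited without proof, so if you want a fully self-contained argument you should regard your ``brute force'' verification as the primary proof of the $\hat c^{\hphantom{1}3}_{12}=1$ entry rather than as a mere cross-check.
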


\begin{theorem}[\cite{Arteaga-Malakhaltsev}]\label{prop:5_2}
The connection coefficients of $\hat\nabla$ on $SO(M,g)$ are: 
\begin{equation*}
\begin{array}{lll}\label{eq:5_24}
\hat \Gamma^{\hphantom{1}1}_{12} = c^{\hphantom{1}1}_{12} & \hat \Gamma^{\hphantom{1}1}_{13} = 0 & \hat \Gamma^{\hphantom{1}2}_{13} = -\frac{1}{2} \\
\hat \Gamma^{\hphantom{1}1}_{22} = c^{\hphantom{1}2}_{12} & \hat \Gamma^{\hphantom{1}1}_{23} = \frac{1}{2} & \hat \Gamma^{\hphantom{1}2}_{23} = 0 \\
\hat \Gamma^{\hphantom{1}1}_{32} = \frac{1}{2} & \hat \Gamma^{\hphantom{1}1}_{33} = \frac{e_1 K}{K} & \hat \Gamma^{\hphantom{1}2}_{33} = \frac{e_2 K}{K}
\end{array}
\end{equation*}
The other connection coefficients are determined by the property $\hat \Gamma^{\hphantom{1}b}_{ac} = -\hat \Gamma^{\hphantom{1}c}_{ab}$.
\end{theorem}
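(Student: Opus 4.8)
The plan is to obtain the coefficients of $\hat\nabla$ purely algebraically from the structure functions $\hat c^{\hphantom{1}k}_{ij}$ already computed in Theorem~\ref{prop:5.1}, by repeating for the $\hat g$-orthonormal frame $\{\mathcal{E}_1,\mathcal{E}_2,\mathcal{E}_3\}$ on $SO(M,g)$ the same reasoning that produced the coefficients of $\nabla$ on $(M,g)$ in the preliminaries. The essential point is that the frame \eqref{eq:4_18} is orthonormal with respect to $\hat g$, so every inner product $\hat g(\mathcal{E}_i,\mathcal{E}_j)=\delta_{ij}$ is constant. Hence in the Koszul formula
\begin{equation*}
\hat g(\hat\nabla_X Y,Z)=\frac12\bigl(X\hat g(Y,Z)+Y\hat g(X,Z)-Z\hat g(X,Y)+\hat g([X,Y],Z)+\hat g([Z,X],Y)-\hat g([Y,Z],X)\bigr),
\end{equation*}
evaluated on $X=\mathcal{E}_i$, $Y=\mathcal{E}_j$, $Z=\mathcal{E}_k$, the three derivative terms vanish and only the commutator terms survive, giving the exact analogue of the identity already established on $M$:
\begin{equation*}
\hat\Gamma^{\hphantom{1}k}_{ij}=\frac12\bigl(\hat c^{\hphantom{1}k}_{ij}+\hat c^{\hphantom{1}j}_{ki}+\hat c^{\hphantom{1}i}_{kj}\bigr),\qquad i,j,k=\overline{1,3}.
\end{equation*}

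I would then substitute the values from \eqref{eq:5_19} into this identity and read off each coefficient, keeping the lower-index antisymmetry $\hat c^{\hphantom{1}k}_{ji}=-\hat c^{\hphantom{1}k}_{ij}$ in mind. For example, $\hat\Gamma^{\hphantom{1}1}_{12}=\frac12(\hat c^{\hphantom{1}1}_{12}+\hat c^{\hphantom{1}2}_{11}+\hat c^{\hphantom{1}1}_{12})=c^{\hphantom{1}1}_{12}$ because $\hat c^{\hphantom{1}2}_{11}=0$; the mixing coefficient $\hat\Gamma^{\hphantom{1}2}_{13}=\frac12(\hat c^{\hphantom{1}2}_{13}+\hat c^{\hphantom{1}3}_{21}+\hat c^{\hphantom{1}1}_{23})=-\frac12$ arises solely from $\hat c^{\hphantom{1}3}_{21}=-\hat c^{\hphantom{1}3}_{12}=-1$; and $\hat\Gamma^{\hphantom{1}1}_{33}=\frac12(\hat c^{\hphantom{1}1}_{33}+\hat c^{\hphantom{1}3}_{13}+\hat c^{\hphantom{1}3}_{13})=\frac{e_1K}{K}$. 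Running through the remaining admissible index triples reproduces the whole table.

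Finally, the asserted symmetry $\hat\Gamma^{\hphantom{1}b}_{ac}=-\hat\Gamma^{\hphantom{1}c}_{ab}$ need not be computed entry by entry: it is a structural consequence of metric compatibility, since differentiating $\hat g(\mathcal{E}_b,\mathcal{E}_c)=\delta_{bc}$ along $\mathcal{E}_a$ gives $\hat\Gamma^{\hphantom{1}c}_{ab}+\hat\Gamma^{\hphantom{1}b}_{ac}=0$, so only the independent entries listed above need to be recorded. I do not expect a genuine obstacle in this argument: the content is entirely bookkeeping, and the single place that demands care is the consistent combination of the lower-index antisymmetry of the $\hat c^{\hphantom{1}k}_{ij}$ with the sign pattern of the Koszul formula, since one transposed index would propagate through several entries of the table.
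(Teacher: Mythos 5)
Your proposal is correct and follows exactly the route the paper sets up: the Koszul formula for a $\hat g$-orthonormal frame reduces to $\hat\Gamma^{\hphantom{1}k}_{ij}=\frac12(\hat c^{\hphantom{1}k}_{ij}+\hat c^{\hphantom{1}j}_{ki}+\hat c^{\hphantom{1}i}_{kj})$ (the analogue of the identity in subsection~2.2), into which one substitutes the structure functions of Theorem~\ref{prop:5.1}; your sample evaluations and the metric-compatibility argument for $\hat\Gamma^{\hphantom{1}b}_{ac}=-\hat\Gamma^{\hphantom{1}c}_{ab}$ all check out against the stated table.
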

Here $K=K(x)$ is the curvature of $(M,g)$, and $e_{i}K$ stands for the differentiation of $K$ with respect to  $e_{i}$. 

\begin{theorem}[\cite{Arteaga-Malakhaltsev}]\label{prop:5_3}
The coordinates $\hat{R}^{l}_{ijk}$ of the curvature tensor are:
\begin{equation*}\label{eq:5_26}
\begin{array}{l}
\hat R_{1212} = \frac{3}{4} - K,
\\
\hat R_{1213} = \frac{e_1 K}{K},
\\
\hat R_{1223} = \frac{e_2 K}{K},
\\
\hat R_{1313} = -\frac{1}{4} - e_1(\frac{e_1 K}{K}) - c^1_{12} \frac{e_2 K}{K} 
+ \left(\frac{e_1 K }{K}\right)^2,  
\\
\hat R_{1323} = - e_1(\frac{e_2 K}{K}) + c^1_{12} \frac{e_1 K}{K} 
+ \frac{e_1 K }{K}\frac{e_2 K }{K},
\\
\hat R_{2323} = -\frac{1}{4} - e_2(\frac{e_2 K}{K}) + c^2_{12} \frac{e_1 K}{K} 
+ \left(\frac{e_2 K }{K}\right)^2.  
\end{array}
\end{equation*}
\end{theorem}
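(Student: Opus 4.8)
The plan is to compute the six listed components directly from the structure-equation formula \eqref{eq:3_11}, applied to the $\hat g$-orthonormal frame $\{\mathcal{E}_1,\mathcal{E}_2,\mathcal{E}_3\}$ rather than to a coordinate frame, feeding in the connection coefficients $\hat\Gamma^{\hphantom{1}k}_{ij}$ from Theorem~\ref{prop:5_2} and the structure functions $\hat c^{\hphantom{1}k}_{ij}$ from Theorem~\ref{prop:5.1}. Since $\{\mathcal{E}_i\}$ is orthonormal for $\hat g$, lowering the upper index is trivial, so $\hat R_{ijkl}=\hat R^{\hphantom{1}l}_{ijk}$; and since $\hat\nabla$ is the Levi-Civita connection of $\hat g$ (its coefficients obey $\hat\Gamma^{\hphantom{1}b}_{ac}=-\hat\Gamma^{\hphantom{1}c}_{ab}$, which in an orthonormal frame is metricity), the tensor $\hat R_{ijkl}$ enjoys the full Riemann symmetries. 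In dimension three these symmetries leave exactly the six components $\hat R_{1212},\hat R_{1213},\hat R_{1223},\hat R_{1313},\hat R_{1323},\hat R_{2323}$ independent, so it suffices to evaluate those.

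Before substituting I would record two simplifications that cut the work down sharply. First, every function occurring in $\hat\Gamma^{\hphantom{1}k}_{ij}$ and $\hat c^{\hphantom{1}k}_{ij}$ --- namely $c^{\hphantom{1}1}_{12},c^{\hphantom{1}2}_{12}$ and $\frac{e_1K}{K},\frac{e_2K}{K}$ --- is a pullback from $M$, hence independent of the fiber coordinate $\varphi$; therefore $\mathcal{E}_3=K\partial_\varphi$ annihilates all of them, while $\mathcal{E}_a=e_a+c^{\hphantom{1}a}_{12}\partial_\varphi$ acts on them simply as $e_a$ for $a=1,2$. Second, the antisymmetry $\hat\Gamma^{\hphantom{1}b}_{ac}=-\hat\Gamma^{\hphantom{1}c}_{ab}$ determines every connection coefficient not listed in Theorem~\ref{prop:5_2} (in particular $\hat\Gamma^{\hphantom{1}c}_{ac}=0$), so each sum over the internal index $s$ in \eqref{eq:3_11} collapses to a small number of nonzero products.

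Then I would substitute component by component. For $\hat R_{1212}=\hat R^{\hphantom{1}2}_{121}$ the vertical ($s=3$) products contribute the constant $\tfrac34$, while the remaining terms collapse to $e_2c^{\hphantom{1}1}_{12}-e_1c^{\hphantom{1}2}_{12}+(c^{\hphantom{1}1}_{12})^2+(c^{\hphantom{1}2}_{12})^2$; the key observation to emphasize is that this last combination is exactly the value $R^{\hphantom{1}2}_{121}$ of the base curvature computed from \eqref{eq:3_11} on $M$ itself, which equals $-K$ for the Gaussian curvature with the curvature convention used here, giving $\hat R_{1212}=\tfrac34-K$. Likewise the second derivatives of $K$ appearing in $\hat R_{1313},\hat R_{1323},\hat R_{2323}$, such as $-e_1\!\bigl(\tfrac{e_1K}{K}\bigr)$, arise from the $\mathcal{E}_a$-derivative of coefficients like $\hat\Gamma^{\hphantom{1}3}_{31}=-\hat\Gamma^{\hphantom{1}1}_{33}=-\tfrac{e_1K}{K}$; the squares $\bigl(\tfrac{e_aK}{K}\bigr)^2$ come from the structure-function term $-\hat c^{\hphantom{1}3}_{a3}\hat\Gamma^{\hphantom{1}3}_{3a}$; and the mixed terms $c^{\hphantom{1}a}_{12}\tfrac{e_bK}{K}$ come from products of the form $\hat\Gamma^{\hphantom{1}3}_{3s}\hat\Gamma^{\hphantom{1}s}_{aa}$.

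I expect the main obstacle to be purely the bookkeeping: each of the six components is a five-term expression in \eqref{eq:3_11}, each term is itself a sum over the three-valued index $s$, and several nonzero products acquire their sign only after the antisymmetry rule is applied, so the genuine risk is a sign slip or an omitted $s=3$ vertical contribution rather than any conceptual difficulty. To keep this under control I would first tabulate all nonzero $\hat\Gamma^{\hphantom{1}k}_{ij}$ and $\hat c^{\hphantom{1}k}_{ij}$ (including those obtained by antisymmetry) and only then assemble the five terms of each component.
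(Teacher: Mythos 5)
The paper states this theorem without proof, importing it from \cite{Arteaga-Malakhaltsev}, so there is no in-text argument to compare against; your direct computation via \eqref{eq:3_11} with the data of Theorems~\ref{prop:5.1} and~\ref{prop:5_2} is the natural (and surely the intended) route. I spot-checked it: e.g.\ $\hat R_{1212}=\hat R^{\hphantom{1}2}_{121}$ picks up $\tfrac14$ from $\hat\Gamma^{\hphantom{1}2}_{13}\hat\Gamma^{\hphantom{1}3}_{21}$ and $\tfrac12$ from $-\hat c^{\hphantom{1}3}_{12}\hat\Gamma^{\hphantom{1}2}_{31}$, with the remainder equal to $R^{\hphantom{1}2}_{121}=e_2c^{\hphantom{1}1}_{12}-e_1c^{\hphantom{1}2}_{12}+(c^{\hphantom{1}1}_{12})^2+(c^{\hphantom{1}2}_{12})^2=-K$, and $\hat R_{1313}$ reproduces all four stated terms from exactly the sources you identify, so your accounting of where each contribution arises is accurate and the proposal is correct.
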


\section{Isometries of the Wagner lift of metric}

Let $(M,g)$ be a two-dimensional oriented Riemannian manifold, and $(SO(M,g),\hat g)$ be the total space of the $SO(2)$-principal bundle $\pi : SO(M,g) \to M$ of positively oriented orthonormal frames of $(M,g)$ endowed by the Wagner lift $\hat g$ of the metric $g$. 

Let us denote by $L(M)$ the linear frame bundle of $M$.
Any diffeomorphism $f : M \to M$ induces the diffeomorphism $f^c : L(M) \to L(M)$, $\{e_1,e_2\}_x \mapsto \{df_x(e_1), df_x(e_2)\}$, where $df : TM \to TM$ is the tangent map. Generally, $f^c$ does not map $SO(M,g)$ to itself, but if $f$ is an isometry of $(M,g)$ preserving orientation, then $f^c$ maps an orthonormal frame to an orthonormal frame, so we have the map $f^c : SO(M,g) \to SO(M,g)$.
\emph{In what follows we consider isometries preserving orientation}.

If $\Sigma = \{x \in M | K(x) = 0\}$ is a one-dimensional submanifold, then, for any isometry $f$, we have $f(M \setminus \Sigma)=M \setminus \Sigma$. Therefore, the lift $f^c$  determines a diffeomorphism $f^c : SO(M,g) \setminus \hat{\Sigma} \to  SO(M,g) \setminus \hat{\Sigma}$.

\begin{theorem} \label{prop:5_4}
If $f$ is an isometry of $(M,g)$, then $f^c$ is an isometry of $(SO(M,g) \setminus \hat{\Sigma},\hat g)$. 
\end{theorem}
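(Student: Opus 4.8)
The plan is to show that the tangent map $df^c$ carries the canonical $\hat g$-orthonormal frame $\{\mathcal{E}_1,\mathcal{E}_2,\mathcal{E}_3\}$ at a point $p$ to the corresponding canonical frame at $f^c(p)$, which immediately gives that $df^c$ is a linear isometry fiberwise, hence $f^c$ is an isometry. The key observation is that every ingredient of the frame \eqref{eq:4_18} is built functorially from $g$: the horizontal distribution $\mathcal{H}$ comes from the Levi-Civita connection, the vertical field $\partial_\varphi$ is the fundamental field of the $SO(2)$-action, and the coefficient $K$ is the Gaussian curvature, all of which are preserved by an orientation-preserving isometry.

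First I would verify that $f^c$ intertwines the relevant structures. Because $f$ is an isometry, $df$ preserves $g$, so $f^c$ maps orthonormal frames to orthonormal frames and commutes with the right $SO(2)$-action: $f^c(p\,r(\varphi)) = f^c(p)\,r(\varphi)$. Consequently $df^c$ sends the fundamental vertical field to itself, $df^c(\partial_\varphi) = \partial_\varphi$. Next, since an isometry preserves the Levi-Civita connection $\nabla$, the induced map $f^c$ preserves the connection form $\omega$, and therefore preserves the horizontal distribution $\mathcal{H}$ and commutes with horizontal lifting: if $e_a$ is an orthonormal frame field on $U$ and $\bar e_a = df(e_a)$ the transported frame on $f(U)$, then $df^c(E^h_a) = \bar E^h_a$ at the corresponding points.

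The remaining point is the vertical component and the curvature factor. Because $K$ is an isometry invariant, $K(f(x)) = K(x)$, so the vertical field $\mathcal{E}_3 = K\,\partial_\varphi$ is mapped by $df^c$ to $\bar K\,\partial_\varphi = \mathcal{E}_3$ expressed in the transported frame. Combining this with the horizontal statement and formula \eqref{eq:4_18}, I get $df^c(\mathcal{E}_i) = \bar{\mathcal{E}}_i$ for $i=1,2,3$, where $\bar{\mathcal{E}}_i$ is the orthonormal frame \eqref{eq:4_18} built from $\bar e_a$ on $f(U)$. Since both frames are $\hat g$-orthonormal by construction (Definition~\ref{def:4_1} and \eqref{eq:4_50}), $df^c$ is an isometry of tangent spaces, which is exactly the claim. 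The argument is local but frame-independent, so it glues to give that $f^c$ is a global isometry of $(SO(M,g)\setminus\hat\Sigma,\hat g)$.

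The step I expect to be the main obstacle is the bookkeeping in the horizontal statement $df^c(E^h_a) = \bar E^h_a$: one must be careful that the structure functions $c^{\hphantom{1}k}_{ij}$ appearing in \eqref{eq:2_13} transform correctly under $f$, i.e.\ that the vertical correction term $c^{\hphantom{1}k}_{12}\partial_\varphi$ matches after transport. This is genuinely where the invariance of $\nabla$ is used, since the $\partial_\varphi$-component of the horizontal lift is precisely the Christoffel data of the connection, and it is the only place where a direct computation (rather than a soft invariance argument) might be needed. An alternative, perhaps cleaner, route that avoids frame bookkeeping is to argue intrinsically: $f^c$ preserves $\mathcal{H}$, $V$, the metric on $\mathcal{H}$ (because $d\pi\circ df^c = df\circ d\pi$ and $f$ is a $g$-isometry), orthogonality of $\mathcal{H}$ and $V$, and the value $\hat g(\partial_\varphi,\partial_\varphi) = 1/K^2$ (because $K\circ f = K$); by Definition~\ref{def:4_1} these four facts determine $\hat g$ completely, so $f^c$ preserves $\hat g$.
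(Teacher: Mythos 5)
Your proposal is correct and follows essentially the same route as the paper's proof: establish that $f^c$ is $SO(2)$-equivariant (hence preserves $\partial_\varphi$), that it preserves the horizontal distribution and commutes with horizontal lifting because an isometry preserves the Levi-Civita connection, and that $K\circ f=K$, then conclude that $df^c$ carries the $\hat g$-orthonormal frame $\{\mathcal{E}_1,\mathcal{E}_2,\mathcal{E}_3\}$ of \eqref{eq:4_18} to the corresponding frame built from the transported base frame. The ``intrinsic'' variant you sketch at the end is a harmless repackaging of the same three invariance facts via Definition~\ref{def:4_1}.
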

\begin{proof} For any $p \in SO(M,g)$ and $\mu \in SO(2)$, we have $f^c(p \mu) = f^c(p) \mu$, hence $f^c$ maps the fibers of $SO(M,g)$ onto the fibers, and the fundamental vector field $\partial_\varphi$ is invariant under $f^c$. Any isometry $f$ of the metric $g$ is an automorphism of the Levi-Civita connection of $g$, therefore $f^c$ is an automorphism of the infinitesimal connection $\mathcal{H}$, so maps $\mathcal{H}(p)$ to $\mathcal{H}(f^c(p))$ for any $p \in SO(M,g)$. Hence, for any $v \in T_x M$, and $p = \pi^{-1}(x)$, the horizontal lift $v^h$ of $v$ at $p$ is mapped by $f^c$ to the horizontal lift of the vector $df_x(v)$ at the point $f^c(p)$, i.e.  $df^c_p (v^h) = (df_x(v))^h$. Finally, for the curvature we have $K(f(x))=K(x)$ for any $x \in M$.  

For each orthonormal frame field $\{e_1,e_2\}$ on an open set $U \subset M$, we have constructed the  frame field $\{\mathcal{E}_1,\mathcal{E}_2,\mathcal{E}_3\}$ on $\pi^{-1}(U) \subset SO(M,g)$ (see \eqref{eq:4_18}) which is orthonormal with respect to $\hat g$. Let $U' = f(U)$. Then $f$ maps the orthonormal frame field $\{e_1,e_2\}$ on $U$ to an orthonormal frame field $\{e'_1,e'_2\}$ on $U'$. From above it follows that the complete lift $f^c$ maps the vector fields $\mathcal{E}_a$, $a=\overline{1,3}$, to the vector fields  $\mathcal{E}'_a$ which form the orthonormal frame field corresponding to $\{e'_1,e'_2\}$. Hence $f^c$ is an isometry.
\end{proof}
\begin{remark}
Theorem~\ref{prop:5_4} says in fact that the Wagner lift is a ``natural'' construction, that is the correspondence $(M,g) \to (SO(M,g),\hat g)$ is a functor from the category of the two-dimensional Riemannian manifolds (the morphisms of the category are isometries) to the category of the three-dimensional Riemannian manifolds.
\end{remark}

Any isometry $f$ maps $M \setminus \Sigma$ onto itself, and is an isometry of $(M \setminus \Sigma, g|_{M \setminus \Sigma})$. The lift $f^c$ also maps  $SO(M,g) \setminus \hat{\Sigma}$ onto itself and, according to Theorem~\ref{prop:5_4}, is an isometry of $\hat g$. Therefore, in this section, for the brevity of notation, \emph{we will assume that the curvature $K$ of $g$ does not vanish}. The reader can easily obtain the versions of results below for the case $\Sigma \ne \emptyset$ by substituting $M \setminus \Sigma$ for $M$, and $SO(M) \setminus \hat\Sigma$ for $SO(M,g)$. 

Let us denote by $I(N,h)$ the isometry group of a Riemannian manifold $(N,h)$. 
\begin{corollary}
The map   $c : I(M,g) \to I(SO(M,g),\hat g)$, $f \to f^c$, is a monomorphism of the isometry groups.
\end{corollary}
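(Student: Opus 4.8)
The plan is to establish the two properties that together make $c$ a monomorphism: that $c$ is a homomorphism of groups, and that it is injective. That $c$ actually takes values in $I(SO(M,g),\hat g)$ is precisely the content of Theorem~\ref{prop:5_4}, so that part requires no further argument; I would simply invoke it.

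To prove that $c$ is a homomorphism, I would appeal directly to the definition of the complete lift together with the chain rule. Recall that $f^c$ sends a frame $\{e_1,e_2\}_x$ to $\{df_x(e_1),df_x(e_2)\}_{f(x)}$. For two orientation-preserving isometries $f$ and $h$, the identity $d(f\circ h)_x = df_{h(x)}\circ dh_x$ gives
\begin{equation*}
(f\circ h)^c(\{e_1,e_2\}_x) = \{df_{h(x)}(dh_x(e_1)),\, df_{h(x)}(dh_x(e_2))\}_{f(h(x))} = f^c(h^c(\{e_1,e_2\}_x)),
\end{equation*}
so $(f\circ h)^c = f^c\circ h^c$, while $(\mathrm{id}_M)^c = \mathrm{id}_{SO(M,g)}$ is immediate. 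Hence $c$ is a group homomorphism; in particular $f^c$ has inverse $(f^{-1})^c$, confirming that it is a diffeomorphism consistent with Theorem~\ref{prop:5_4}.

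Since $c$ is a homomorphism, injectivity is equivalent to triviality of its kernel. The key observation is the covering relation $\pi\circ f^c = f\circ\pi$, which follows at once from the definition of $f^c$, because the lifted frame $f^c(p)$ sits in the fiber over $f(\pi(p))$. If $f$ lies in the kernel, i.e. $f^c = \mathrm{id}_{SO(M,g)}$, then composing with $\pi$ yields $\pi = \pi\circ f^c = f\circ\pi$; as $\pi$ is surjective, this forces $f = \mathrm{id}_M$. Thus $c$ has trivial kernel and is injective, so it is a monomorphism.

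I expect no serious obstacle here: the corollary is essentially a formal consequence of the functoriality of the frame-bundle lift combined with Theorem~\ref{prop:5_4}. The only delicate point, already settled in the discussion preceding the statement, is that for an orientation-preserving isometry $f$ the lift $f^c$ genuinely restricts to a self-map of $SO(M,g)$ (sending orthonormal frames to orthonormal frames), which is what makes $c$ well defined in the first place.
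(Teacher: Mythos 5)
Your proof is correct and follows essentially the route the paper intends: the paper states this corollary without proof as an immediate consequence of Theorem~\ref{prop:5_4}, and your filling-in of the details (functoriality of the lift via the chain rule, plus injectivity from the covering relation $\pi\circ f^c=f\circ\pi$) is exactly the standard argument being taken for granted there.
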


We say that a diffeomorphism $\hat f : SO(M,g) \to SO(M,g)$ is projectable if there exists a diffeomorphism $f : M \to M$ such that $\pi \circ \hat f = f \circ \pi$, and we write $f = \pi_*(\hat f)$.
Let us denote by $I_p(SO(M,g),\hat g)$ the Lie group of projectable isometries of $(SO(M,g), \hat g)$.

\begin{theorem}
\label{prop:5_4_1}
For any $f \in I_p(SO(M,g),\hat g)$, $\pi_*(f) \in I(M,g)$.  
\end{theorem}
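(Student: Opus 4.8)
=== PROOF PROPOSAL FOR THEOREM~\ref{prop:5_4_1} ===

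\medskip

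The plan is to show that any projectable isometry $f$ of $(SO(M,g),\hat g)$ descends to an isometry of the base $(M,g)$. Let $\bar f = \pi_*(f)$, so that $\pi \circ f = \bar f \circ \pi$; by hypothesis $\bar f$ is a diffeomorphism of $M$, and it remains only to verify that $\bar f^* g = g$. The natural strategy is to relate the base metric $g$ to the horizontal part of $\hat g$ and then use projectability together with the isometry condition on $f$.

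First I would record how the vertical distribution $V$ interacts with $f$. Differentiating the relation $\pi \circ f = \bar f \circ \pi$ gives $d\pi \circ df = d\bar f \circ d\pi$, so $df$ carries $\ker d\pi = V$ into $V$; hence $f$ preserves the vertical distribution. Since $f$ is an isometry of $\hat g$, it must also preserve the $\hat g$-orthogonal complement of $V$. By the defining relations \eqref{eq:4_1}--\eqref{eq:4_3}, the $\hat g$-orthogonal complement of $V$ is exactly the horizontal distribution $\mathcal{H}$ (the horizontal and vertical vectors are declared orthogonal, and the metric is nondegenerate on each). Therefore $df(\mathcal{H}_p) = \mathcal{H}_{f(p)}$ for every $p$.

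Next I would transport the metric computation down to $M$. Fix a point $x \in M$ and a tangent vector $v \in T_x M$. Choose any $p \in \pi^{-1}(x)$ and let $v^h \in \mathcal{H}_p$ be its horizontal lift, so $d\pi(v^h) = v$ and, by \eqref{eq:4_1}, $\hat g(v^h, w^h)_p = g(v,w)_x$ for all $v,w$. Because $f$ preserves $\mathcal{H}$, the vector $df(v^h)$ lies in $\mathcal{H}_{f(p)}$, and applying $d\pi$ gives $d\pi(df(v^h)) = d\bar f(d\pi(v^h)) = d\bar f(v)$; thus $df(v^h)$ is precisely the horizontal lift of $d\bar f(v)$ at $f(p)$. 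Now using in turn \eqref{eq:4_1} at $f(p)$, the fact that $f$ is an isometry, and \eqref{eq:4_1} at $p$, I get
\begin{equation*}
g\bigl(d\bar f(v),\, d\bar f(w)\bigr)
= \hat g\bigl(df(v^h),\, df(w^h)\bigr)_{f(p)}
= \hat g\bigl(v^h,\, w^h\bigr)_{p}
= g(v,w).
\end{equation*}
This holds for all $v,w \in T_x M$ and all $x$, so $\bar f^* g = g$, i.e.\ $\bar f = \pi_*(f) \in I(M,g)$.

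The one point that requires genuine care, and which I expect to be the main obstacle, is the claim that an isometry preserving $V$ must preserve $\mathcal{H}$. This uses that $\mathcal{H}$ is characterized metrically as $V^{\perp}$ with respect to $\hat g$, which is built into Definition~\ref{def:4_1} via \eqref{eq:4_2}; but it relies on $\hat g$ being nondegenerate, hence on working away from the singular set $\hat\Sigma$. Under the running convention of this section that $K$ does not vanish, $\hat\Sigma = \emptyset$ and the argument is clean; for the general case one invokes the reduction to $M \setminus \Sigma$ and $SO(M,g)\setminus\hat\Sigma$ explained just before the theorem, noting that any projectable isometry must permute the singular set and hence its complement. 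A secondary subtlety is that the horizontal lift $v^h$ depends on the chosen point $p$ in the fiber, but the final equality is independent of that choice, so no extra argument is needed.
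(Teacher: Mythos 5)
Your proposal is correct and follows essentially the same route as the paper's proof: projectability forces $df$ to preserve the vertical distribution, the isometry condition then forces it to preserve $\mathcal{H}=V^{\perp}$, and the chain of equalities $g(v,w)=\hat g(v^h,w^h)_p=\hat g(df(v^h),df(w^h))_{f(p)}=g(d\bar f(v),d\bar f(w))$ via \eqref{eq:4_1} is exactly the computation in the paper. Your added remarks on the singular set and on the independence of the choice of $p$ in the fiber are sensible but not needed beyond what the paper already assumes in this section.
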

\begin{proof}
We have $\pi \circ \hat f = f \circ \pi$, hence follows that $\hat f$ maps each fiber of the bundle $SO(M,g) \to M$ onto a fiber. At the same time, as the horizontal distribution $\mathcal{H}$ is orthogonal to the fibers, and $\hat f$ is an isometry, then $\hat f$ is  an automorphism of the distribution $\mathcal{H}$. Therefore, for any point $x \in M$ and $V, W \in T_x M$, we take $p \in \pi^{-1}(x)$ and $\hat V, \hat W \in \mathcal{H}(p)$ such that $d\pi_p(\hat V)=V$ and  $d\pi_p(\hat W)=W$. Set $V' = df_x(V)$, $W' = df_x(W)$,   $\hat V' = d\hat f_p(\hat V)$, $\hat W' = d\hat f_p(\hat W)$. 
Then we have 
\begin{equation}
g_{x}(V,W) = \hat g_{p}(\hat V, \hat W) = \hat g_{f(p)}(\hat V', \hat W') = g_{f(x)}(V',W').
\label{} 
\end{equation}
This proves the requirement statement.
\end{proof}

It is clear that $\pi_* : I(SO(M,g),\hat g) \to I(M,g)$ is a Lie group homomorphism, and $\pi_* \circ c = Id_{I(M,g)}$, so we obtain the following result. 
\begin{corollary}
The Lie group homomorphism $\pi_* : I(SO(M,g),\hat g) \to I(M,g)$ is surjective.
\end{corollary}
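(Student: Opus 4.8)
The plan is to prove surjectivity by exhibiting a right inverse of $\pi_*$, since a group homomorphism that admits a section is automatically onto. The section is nothing but the monomorphism $c : I(M,g) \to I(SO(M,g),\hat g)$, $f \mapsto f^c$, established just above, and the whole corollary reduces to the single identity $\pi_* \circ c = \mathrm{Id}_{I(M,g)}$.

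First I would confirm that $c$ lands in the domain of $\pi_*$, that is, that $f^c$ is a \emph{projectable} isometry for every $f \in I(M,g)$. That $f^c$ is an isometry of $(SO(M,g),\hat g)$ is precisely Theorem~\ref{prop:5_4}. Projectability is built into the definition of the complete lift: since $f^c$ sends the frame $\{e_1,e_2\}_x$ to $\{df_x(e_1), df_x(e_2)\}_{f(x)}$, it carries the fiber over $x$ into the fiber over $f(x)$, whence $\pi \circ f^c = f \circ \pi$. By the very definition of $\pi_*$ this says $f^c \in I_p(SO(M,g),\hat g)$ with $\pi_*(f^c) = f$. Thus $\pi_* \circ c = \mathrm{Id}_{I(M,g)}$, and surjectivity follows immediately: an arbitrary $f \in I(M,g)$ equals $\pi_*(f^c)$ and hence lies in the image of $\pi_*$.

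I do not expect a genuine obstacle here; the argument is formal once the section $c$ is available. The only point deserving a word is a bookkeeping matter about the domain of $\pi_*$. Theorem~\ref{prop:5_4_1} defines $\pi_*$ on the subgroup $I_p(SO(M,g),\hat g)$ of projectable isometries, whereas the statement writes $\pi_*$ on all of $I(SO(M,g),\hat g)$; these coincide precisely when every $\hat g$-isometry preserves the fibers (equivalently, preserves the vertical line field $\langle \partial_\varphi\rangle$, the $\hat g$-orthogonal complement of $\mathcal{H}$). Verifying that would require showing the fibration is metrically distinguished, which is the one substantive check lurking in the surrounding discussion. It is, however, irrelevant to surjectivity as stated: since the image of the section $c$ already sits inside $I_p(SO(M,g),\hat g)$, every element of $I(M,g)$ is attained, so $\pi_*$ is onto regardless of whether $I_p$ exhausts $I$.
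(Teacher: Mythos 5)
Your argument is correct and is essentially the paper's own: the text immediately preceding the corollary notes that $\pi_* \circ c = \mathrm{Id}_{I(M,g)}$, and surjectivity follows from the existence of this section. Your additional remark about the domain of $\pi_*$ (projectable versus all isometries) is a fair observation about the paper's notation, but as you note it does not affect the conclusion.
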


Recall that an infinitesimal isometry of a Riemannian manifold $(N,h)$ is a vector field whose flow consists of the isometries of $(N,h)$. The set of infinitesimal isometries of $(N,h)$ is a Lie subalgebra in the algebra of vector fields on $N$, and  we will denote it by $\mathfrak{I}(N,h)$. Note that $\mathfrak{I}(N,h)$ is the Lie algebra of the Lie group $I(N,h)$. 

Let $\phi_s$ be the flow of an infinitesimal isometry $X \in \mathfrak{I}(M,g)$, then, by Theorem~\ref{prop:5_4}, the flow $\phi^c_s$ on $SO(M,g)$ consists of isometries of $(SO(M,g),\hat g)$, so the \emph{complete lift} $X^c = \frac{d}{ds}|_{s=0} \phi_s^c$ of $X$ is an infinitesimal isometry in $\mathfrak{I}(SO(M,g),\hat g)$. Thus we get the 

\begin{corollary}
The map   $c_* : \mathfrak{I}(M,g) \to \mathfrak{I}(SO(M,g),\hat g)$, $X \mapsto X^c$, is a monomorphism of the Lie algebras of the isometry groups.
\end{corollary}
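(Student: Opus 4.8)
The plan is to recognize $c_*$ as the differential at the identity of the Lie group monomorphism $c : I(M,g) \to I(SO(M,g),\hat g)$, $f \mapsto f^c$, whose injectivity and homomorphism property were recorded in the preceding corollaries. Once this identification is made, the statement follows from two standard facts: the differential of a (smooth) homomorphism of Lie groups is a homomorphism of their Lie algebras, and the differential of a monomorphism is injective. The verification of well-definedness, namely that $X^c \in \mathfrak{I}(SO(M,g),\hat g)$ whenever $X \in \mathfrak{I}(M,g)$, is already carried out in the paragraph preceding the corollary, so that part I would simply cite.

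First I would confirm the identification $c_* = dc_e$. Let $X \in \mathfrak{I}(M,g)$ have (local) flow $\phi_s$. The complete lift is functorial, $(\phi_s\circ\phi_t)^c = \phi_s^c\circ\phi_t^c$ and $(\mathrm{id})^c=\mathrm{id}$, so $s\mapsto \phi_s^c = c(\phi_s)$ is a (local) one-parameter group of isometries of $\hat g$ by Theorem~\ref{prop:5_4}. Its infinitesimal generator is by definition $X^c = \frac{d}{ds}\big|_{s=0}\phi_s^c$, which is precisely $c_*(X)$; hence $c_*(X) = \frac{d}{ds}\big|_{s=0} c(\phi_s) = dc_e(X)$. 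Being the differential of a Lie group homomorphism, $c_*$ is then automatically $\mathbf{R}$-linear and bracket-preserving, i.e. $[X,Y]^c = [X^c,Y^c]$. For injectivity I would argue directly from projectability rather than relying on smoothness of $c$: the complete lift satisfies $\pi\circ f^c = f\circ\pi$, so $\pi\circ\phi_s^c = \phi_s\circ\pi$, and differentiating at $s=0$ gives $d\pi_p(X^c_p) = X_{\pi(p)}$ for every $p$. Thus $X^c = 0$ forces $X = 0$ (as $\pi$ is surjective), so $c_*$ is a monomorphism.

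The only point requiring genuine care is the bracket identity $[X,Y]^c = [X^c,Y^c]$, which is where the substance of the homomorphism property sits. I would treat this as the classical statement that the complete lift $\mathfrak{X}(M) \to \mathfrak{X}(L(M))$ to the frame bundle is a Lie algebra homomorphism (see \cite{KN}, Ch.~I), restricted here to infinitesimal isometries, which by Theorem~\ref{prop:5_4} take values in $\mathfrak{I}(SO(M,g),\hat g)$. This is the main obstacle, though it is standard; the alternative self-contained route is to prove it directly from the flow compatibility $\phi_s^c = c(\phi_s)$ together with the functoriality of the complete lift. Linearity and injectivity are then immediate, completing the proof that $c_*$ is a monomorphism of Lie algebras.
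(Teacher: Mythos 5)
Your proposal is correct and follows essentially the same route as the paper, whose entire argument is the one-sentence observation in the preceding paragraph that $\phi_s^c$ consists of isometries by Theorem~\ref{prop:5_4}, so that $X^c=\frac{d}{ds}\big|_{s=0}\phi_s^c$ lies in $\mathfrak{I}(SO(M,g),\hat g)$; the bracket-preservation and injectivity are left implicit as standard properties of the complete lift. You merely make explicit the details (functoriality of the lift, $d\pi(X^c)=X$ for injectivity) that the paper takes for granted.
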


Recall that a vector field $\tilde X$ on the total space $E$ of a fiber bundle $\pi : E \to B$ is called \emph{projectable} if $\tilde X$ is $\pi$-connected to a vector field $X$ on $B$.   It is clear that any $SO(2)$-invariant vector field $\tilde X$ on $SO(M,g)$ is projectable, the opposite is not true. The set $\mathfrak{X}_p(SO(M,g))$ of projectable vector fields is a Lie subalgebra in the Lie algebra $\mathfrak{X}(SO(M,g))$ of vector fields on $SO(M,g)$, and since the isometries also form a Lie subalgebra of $\mathfrak{X}(SO(M,g))$, we arrive at \emph{the Lie subalgebra $\mathfrak{I}_p(SO(M,g),\hat g) \subset \mathfrak{X}(SO(M,g))$ of projectable isometries}.  In what follows we will describe the relationship between $\mathfrak{I}_p(SO(M,g),\hat g)$ and $\mathfrak{I}(M,g)$.  

From the definition of projectable vector field we get the natural Lie algebra homomorphism $\pi_* : \mathfrak{I}_p(SO(M,g),\hat g) \to \mathfrak{X}(M)$ which assigns to each $X \in \mathfrak{I}_p(SO(M,g),\hat g)$ the vector field $\pi_*(X) \in \mathfrak{X}(M)$ which is $\pi$-connected with $X$. 

\begin{theorem}
For any $X \in \mathfrak{I}_p(SO(M,g),\hat g)$, the vector field $\pi_*(X)$ belongs to $\mathfrak{I}(M,g)$. 
\label{thm:3_2}
\end{theorem}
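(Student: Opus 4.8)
The plan is to reduce this infinitesimal statement to the group-level result already in hand, Theorem~\ref{prop:5_4_1}, by passing from the vector field $X$ to its flow. First I would unwind the definition: $X \in \mathfrak{I}_p(SO(M,g),\hat g)$ means that $X$ is projectable and that its (local) flow $\phi_s$ consists of isometries of $(SO(M,g),\hat g)$. Write $Y = \pi_*(X)$ for the vector field on $M$ with which $X$ is $\pi$-related, so that $d\pi \circ X = Y \circ \pi$, and let $\psi_s$ denote the (local) flow of $Y$.

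The key step is the standard fact that a $\pi$-related pair of vector fields has intertwined flows: $\pi \circ \phi_s = \psi_s \circ \pi$ wherever both sides are defined. This holds because $\pi$ carries integral curves of $X$ to integral curves of $Y$; concretely, differentiating $s \mapsto \pi(\phi_s(p))$ and using $\pi$-relatedness shows that this curve solves the defining ODE of $Y$ with initial value $\pi(p)$, so by uniqueness it equals $\psi_s(\pi(p))$. In particular each $\phi_s$ is a \emph{projectable} diffeomorphism with $\pi_*(\phi_s) = \psi_s$, and since each $\phi_s$ is an isometry of $\hat g$, we have $\phi_s \in I_p(SO(M,g),\hat g)$.

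Now Theorem~\ref{prop:5_4_1} applies directly to each $\phi_s$ and yields $\psi_s = \pi_*(\phi_s) \in I(M,g)$ for every $s$. Thus the flow $\psi_s$ of $Y$ consists of isometries of $(M,g)$, which is precisely the statement that $Y = \pi_*(X)$ is an infinitesimal isometry, i.e. $\pi_*(X) \in \mathfrak{I}(M,g)$, as required.

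The main obstacle---indeed essentially the only point demanding care---is that $SO(M,g)$ need not be complete (all the more so once the singular set is excised), so $\phi_s$ and $\psi_s$ are genuinely local flows; one must therefore read ``the flow consists of isometries'' in the local sense and check that Theorem~\ref{prop:5_4_1} is invoked for the local diffeomorphisms $\phi_s$ on their domains of definition. Since both the isometry condition and $\pi$-relatedness are local, this is harmless, but it is where the phrasing must be precise. An alternative, fully infinitesimal route would be to show directly that $\mathcal{L}_X \hat g = 0$ forces $\mathcal{L}_{\pi_*(X)} g = 0$ by evaluating the Lie derivative on horizontal lifts and using the defining relation $\hat g(\hat V,\hat W) = g(d\pi\,\hat V,\, d\pi\,\hat W)$ from Definition~\ref{def:4_1} together with the projectability of $X$; I would nevertheless keep the flow argument as the primary one, since it is shorter and exploits Theorem~\ref{prop:5_4_1} with minimal computation.
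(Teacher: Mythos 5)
Your argument is correct and is essentially identical to the paper's own proof: both pass from $X$ to its flow, use $\pi$-relatedness to get $\pi\circ\phi_s=\psi_s\circ\pi$, and then invoke Theorem~\ref{prop:5_4_1} to conclude that each $\psi_s$ is an isometry of $(M,g)$. Your additional care about local flows is a reasonable refinement, but the route is the same.
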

\begin{proof}
Let $X \in \mathfrak{I}(SO(M,g),\hat g)$ be a projectable vector field which is $\pi$-connected with a vector field $Y$ on $M$, and let us denote the flows of $X$ and $Y$ by $\phi^X_s$ and $\phi^Y_s$ respectively.
Then we have $\pi \phi^X_s = \phi^Y_s \pi$, hence Theorem~\ref{prop:5_4_1} implies the result.
\end{proof}

\begin{corollary}
$\pi_* : \mathfrak{I}_p(SO(M,g),g) \to \mathfrak{I}(M,g)$ is a surjective Lie algebra homomorphism. 
\end{corollary}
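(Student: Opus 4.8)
The plan is to recognize that this corollary is the infinitesimal counterpart of the group-level identity $\pi_* \circ c = Id_{I(M,g)}$ established earlier, and that nearly all the required ingredients are already available. The map $\pi_*$ is a Lie algebra homomorphism by construction: it is the restriction to $\mathfrak{I}_p(SO(M,g),\hat g)$ of the natural homomorphism that sends a projectable field to the unique field on $M$ to which it is $\pi$-related, and this is a homomorphism because $\pi$-relatedness is compatible with Lie brackets (if $X_1,X_2$ are $\pi$-related to $Y_1,Y_2$ then $[X_1,X_2]$ is $\pi$-related to $[Y_1,Y_2]$). Theorem~\ref{thm:3_2} guarantees that $\pi_*$ actually takes values in $\mathfrak{I}(M,g)$. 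Hence the only genuinely new thing to prove is surjectivity.

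For surjectivity I would exhibit an explicit right inverse, namely the complete lift $c_* : Y \mapsto Y^c$. Fix $Y \in \mathfrak{I}(M,g)$ with flow $\phi_s$. By the Corollary preceding Theorem~\ref{thm:3_2}, $c_*$ maps $\mathfrak{I}(M,g)$ into $\mathfrak{I}(SO(M,g),\hat g)$, so $Y^c$ is already an infinitesimal isometry of $\hat g$. It then remains to check two things: that $Y^c$ is \emph{projectable} (so that it lies in $\mathfrak{I}_p$ rather than merely in $\mathfrak{I}$), and that $\pi_*(Y^c) = Y$. Both follow from the single flow relation $\pi \circ \phi_s^c = \phi_s \circ \pi$, which holds because the complete lift of a diffeomorphism covers that diffeomorphism ($\pi \circ f^c = f \circ \pi$).

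Concretely, I would differentiate $\pi \circ \phi_s^c = \phi_s \circ \pi$ at $s=0$ to obtain $d\pi_p(Y^c_p) = Y_{\pi(p)}$ for every $p \in SO(M,g)$. This says precisely that $Y^c$ is $\pi$-related to $Y$; therefore $Y^c \in \mathfrak{I}_p(SO(M,g),\hat g)$ and $\pi_*(Y^c) = Y$. Equivalently, $\pi_* \circ c_* = \mathrm{id}_{\mathfrak{I}(M,g)}$, the infinitesimal analogue of $\pi_* \circ c = Id$, which immediately yields surjectivity and finishes the argument.

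I do not anticipate a genuine obstacle here, since every structural piece (the homomorphism property of $\pi_*$, the target being $\mathfrak{I}(M,g)$ via Theorem~\ref{thm:3_2}, and the monomorphism $c_*$) has already been supplied. The only point requiring a little care is the bookkeeping around the flow identity $\pi \circ \phi_s^c = \phi_s \circ \pi$ and its differentiation: one must confirm that $Y^c$ is literally $\pi$-related to $Y$, not just fiber-preserving, as it is exactly $\pi$-relatedness that certifies projectability and hence membership in $\mathfrak{I}_p$.
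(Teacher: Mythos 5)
Your proposal is correct and follows exactly the argument the paper intends: the paper states this corollary without proof, relying on the already-established facts that $\pi_*$ is a Lie algebra homomorphism landing in $\mathfrak{I}(M,g)$ (Theorem~\ref{thm:3_2}) and that the complete lift $c_*$ provides a right inverse, so $\pi_*\circ c_* = \mathrm{id}$ gives surjectivity. Your careful check that $Y^c$ is genuinely $\pi$-related to $Y$ (via differentiating $\pi\circ\phi^c_s=\phi_s\circ\pi$) is the only detail the paper leaves implicit, and you handle it correctly.
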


A vector field $X$ is an infinitesimal isometry of $(SO(M,g),\hat g)$ if and only if $\mathcal{L}_X \hat g = 0$, where $\mathcal{L}_X$ is the Lie derivative. 
The equation $\mathcal{L}_X \hat g = 0$ can be written as 
\begin{equation*}
X \hat g(Y,Z) - \hat g([X,Y],Z) - \hat g(Y,[X,Z]) = 0,
\label{eq:5_3_1}
\end{equation*}
for any vector fields $Y$ and $Z$.  Now, if we take the orthonormal frame field $\{\mathcal{E}_i\}$ \eqref{eq:4_18},  then this equation can be written, for a vector field $X = X^i \mathcal{E}_i$, as follows: 
\begin{equation*}
\mathcal{E}_i X^j + \mathcal{E}_j X^i + (\hat c^{\hphantom{1}j}_{is} + \hat c^{\hphantom{1}i}_{js})X^s = 0. 
\label{eq:5_3_2}
\end{equation*}  
Hence for an infinitesimal isometry 
$X \in \mathfrak{I}(SO(M),\hat g)$ we get the following system of differential equations:
\begin{eqnarray*}
\mathcal{E}_1 X^1 + \hat c^{\hphantom{1}1}_{1s}X^s = 0,
\\
\mathcal{E}_2 X^2 + \hat c^{\hphantom{1}2}_{2s}X^s = 0,
\\
\mathcal{E}_3 X^3 + \hat c^{\hphantom{1}3}_{3s}X^s = 0,
\\
\mathcal{E}_1 X^2 + \mathcal{E}_2 X^1 + (\hat c^{\hphantom{1}2}_{1s} + \hat c^{\hphantom{1}2}_{1s})X^s = 0,
\\
\mathcal{E}_1 X^3 + \mathcal{E}_3 X^1 + (\hat c^{\hphantom{1}3}_{1s} + \hat c^{\hphantom{1}1}_{3s})X^s = 0,
\\
\mathcal{E}_2 X^3 + \mathcal{E}_3 X^2 + (\hat c^{\hphantom{1}3}_{2s} + \hat c^{\hphantom{1}2}_{3s})X^s = 0.
\label{eq:5_3_3}
\end{eqnarray*}
If now we substitute $\hat c^k_{ij}$ from \eqref{eq:5_19}, we get that 
\begin{eqnarray}
\mathcal{E}_1 X^1 +  c^{\hphantom{1}1}_{12}X^2 = 0,
\label{eq:5_3_4_1}
\\
\mathcal{E}_2 X^2 + \hat c^{\hphantom{1}2}_{21}X^1 = 0,
\label{eq:5_3_4_2}
\\
\mathcal{E}_3 X^3 - \frac{e_1 K}{K} X^1 - \frac{e_2 K}{K} X^2 = 0,
\label{eq:5_3_4_3}
\\
\mathcal{E}_1 X^2 + \mathcal{E}_2 X^1 + c^{\hphantom{1}1}_{21} X^1 + c^{\hphantom{1}2}_{12} X^2 = 0,
\label{eq:5_3_4_4}
\\
\mathcal{E}_1 X^3 + \mathcal{E}_3 X^1 - X^2 + \frac{e_1 K}{K} X^3 = 0,
\label{eq:5_3_4_5}
\\
\mathcal{E}_2 X^3 + \mathcal{E}_3 X^2 - X^1 + \frac{e_2 K}{K} X^3 = 0.
\label{eq:5_3_4_6}
\end{eqnarray}

\begin{theorem}
a) If an infinitesimal isometry $X$ of $(SO(M,g),\hat g)$ is vertical, then $X = C \partial_\varphi$, where $C$ is a constant.

b) Any projectable infinitesimal isometry $X$ of $(SO(M,g),\hat g)$ can be written as $\tilde X = Y^c + C\partial_\varphi$, where $Y$ is the infinitesimal isometry of $(M,g)$ which is $\pi$-connected with $X$, and $C$ is a constant.

c) The Lie subalgebra ${\frak I}_p(SO(M,g),\hat g)$ of projectable infinitesimal isometries is isomorphic to the direct sum of the Lie algebra $\mathfrak{I}(M,g)$ and the one-dimensional Lie algebra $\mathbb{R}$.     
\end{theorem}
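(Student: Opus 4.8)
The plan is to dispatch the three parts in order, using the infinitesimal‑isometry system \eqref{eq:5_3_4_1}--\eqref{eq:5_3_4_6} as the computational engine for part (a) and then deriving (b) and (c) by purely structural arguments. For part (a), I would write a vertical field in the frame \eqref{eq:4_18} as $X = X^3 \mathcal{E}_3$, i.e. with $X^1 = X^2 = 0$, and substitute into \eqref{eq:5_3_4_1}--\eqref{eq:5_3_4_6}. Four of the six equations collapse to $0=0$, and the survivors reduce to $\mathcal{E}_3 X^3 = 0$ from \eqref{eq:5_3_4_3}, together with $\mathcal{E}_1 X^3 + \frac{e_1 K}{K} X^3 = 0$ and $\mathcal{E}_2 X^3 + \frac{e_2 K}{K} X^3 = 0$ from \eqref{eq:5_3_4_5}--\eqref{eq:5_3_4_6}. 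Since $\mathcal{E}_3 = K\partial_\varphi$ and $K \neq 0$ on the region under consideration, the first equation forces $\partial_\varphi X^3 = 0$, so $X^3$ descends to a function on $U \subset M$; then $\mathcal{E}_a X^3 = e_a X^3$, and the remaining two equations read $K\, e_a X^3 + (e_a K) X^3 = e_a(K X^3) = 0$ for $a = 1,2$. As $\{e_1, e_2\}$ spans $TM$, the function $K X^3$ is locally constant, hence constant on the connected manifold $M$; calling this constant $C$ yields $X = X^3 \mathcal{E}_3 = (X^3 K)\partial_\varphi = C\partial_\varphi$.

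For part (b), given a projectable infinitesimal isometry $X$, I would set $Y = \pi_*(X)$; Theorem~\ref{thm:3_2} gives $Y \in \mathfrak{I}(M,g)$, and its complete lift $Y^c$ is again an infinitesimal isometry of $(SO(M,g),\hat g)$ that is $\pi$-connected with $Y$. The difference $Z = X - Y^c$ is then an infinitesimal isometry (the isometries form a vector space) which is $\pi$-connected with $Y - Y = 0$, hence vertical. Applying part (a) to $Z$ gives $Z = C\partial_\varphi$, so $X = Y^c + C\partial_\varphi$, as claimed.

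For part (c), I would organize the conclusion around the short exact sequence $0 \to \mathbb{R}\partial_\varphi \to \mathfrak{I}_p(SO(M,g),\hat g) \xrightarrow{\pi_*} \mathfrak{I}(M,g) \to 0$. Its kernel is exactly the vertical infinitesimal isometries, which by part (a) equals $\mathbb{R}\partial_\varphi$; surjectivity of $\pi_*$ and the splitting $c_* : Y \mapsto Y^c$ (landing in $\mathfrak{I}_p$ because complete lifts are $SO(2)$‑invariant, hence projectable) with $\pi_* \circ c_* = \mathrm{id}$ are already established above. Part (b) shows this splitting is simultaneously a vector‑space complement, so $\mathfrak{I}_p = c_*(\mathfrak{I}(M,g)) \oplus \mathbb{R}\partial_\varphi$ as vector spaces. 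To promote this to a direct sum of Lie algebras, the one remaining point is that the two summands commute, $[\partial_\varphi, Y^c] = 0$: this holds because $Y^c$, being a complete lift, is invariant under the $SO(2)$‑action on $SO(M,g)$, while $\partial_\varphi$ is precisely the fundamental vector field generating that action, so $\mathcal{L}_{\partial_\varphi} Y^c = 0$.

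The main obstacle I expect is the bookkeeping in part (a): recognizing that the substitution $h = K X^3$ collapses the two surviving first‑order equations into $e_a(K X^3)=0$, and then passing carefully from locally constant to globally constant. The secondary, more conceptual subtlety lies in part (c), where one must verify that the extension is a genuine \emph{direct} sum rather than merely a semidirect product; this is exactly why the commutation $[\partial_\varphi, Y^c]=0$ has to be checked rather than taken for granted.
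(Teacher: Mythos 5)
Your proof is correct and follows essentially the same route as the paper: part (a) carries out explicitly the computation the paper only asserts (that \eqref{eq:5_3_4_3}, \eqref{eq:5_3_4_5}, \eqref{eq:5_3_4_6} force $X=C\partial_\varphi$ --- your reduction to $e_a(KX^3)=0$ is exactly that step), and parts (b) and (c) use the same decomposition $X=Y^c+C\partial_\varphi$ and the same commutation $[\partial_\varphi,Y^c]=0$ to get the direct sum. The only slip is a trivial miscount: three, not four, of the six equations collapse to $0=0$ when $X^1=X^2=0$, since \eqref{eq:5_3_4_3} still carries the content $\mathcal{E}_3X^3=0$, which you in fact use.
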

\begin{proof}
a) Let $X \in \mathfrak{I}(SO(M,g),\hat g)$ be a vertical vector field.  Then the equations \eqref{eq:5_3_4_1}, \eqref{eq:5_3_4_2}, \eqref{eq:5_3_4_4} hold true as $X^1=X^2=0$, and from $\eqref{eq:5_3_4_3}, \eqref{eq:5_3_4_5}, \eqref{eq:5_3_4_6}$ we get that $X = C K \mathcal{E}_3 =  C \partial_\varphi$.

b) Now suppose that an infinitesimal isometry $X$ of $(SO(M,g),\hat g)$ is projectable, then $Y = \pi_*(X)$ is an infinitesimal isometry of $(M,g)$ by Theorem~\ref{thm:3_2}.  Then $Z = X - Y^c$ is a vertical infinitesimal isometry of $(SO(M,g),\hat g)$, hence $Z=C\partial_\varphi$, by a). 

c) From b) it follows that $\mathfrak{I}_p(SO(M,g),\hat g)$ is the sum of 
$c_*(\mathfrak{I}(M,g)) \cong \mathfrak{I}(M,g)$ and the Lie algebra $\mathfrak{I}_v(SO(M,g),\hat g)$ of vertical isometries which is isomorphic to $\mathbb{R}$. It is clear that $\mathfrak{I}_v(SO(M,g),\hat g) \cap c_*(\mathfrak{I}(M,g)) = \{0\}$, and moreover, for any $Z \in   \mathfrak{I}_v(SO(M,g),\hat g)$ and $X \in c_*(\mathfrak{I}(M,g))$, the Lie bracket $[X,Z] = 0$. This implies the required statement. 
\end{proof}

\begin{corollary}
$\dim I(SO(M,g),\hat g) \ge \dim I(M,g) + 1$.
\end{corollary}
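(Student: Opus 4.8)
The plan is to deduce the inequality purely by a dimension count, reading off the dimension of $\mathfrak{I}_p(SO(M,g),\hat g)$ from part (c) of the preceding theorem and combining it with the standard identification between the dimension of an isometry group and the dimension of its Lie algebra of infinitesimal isometries. No new analysis is needed; everything follows from results already in hand.

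First I would invoke the fact, already recorded in the text preceding the theorem, that $\mathfrak{I}(N,h)$ is the Lie algebra of the Lie group $I(N,h)$ for any Riemannian manifold $(N,h)$; consequently $\dim I(N,h) = \dim \mathfrak{I}(N,h)$. Applying this to both $(M,g)$ and $(SO(M,g),\hat g)$ gives $\dim I(M,g) = \dim \mathfrak{I}(M,g)$ and $\dim I(SO(M,g),\hat g) = \dim \mathfrak{I}(SO(M,g),\hat g)$.

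Next I would use that $\mathfrak{I}_p(SO(M,g),\hat g)$ is a Lie subalgebra of the finite-dimensional Lie algebra $\mathfrak{I}(SO(M,g),\hat g)$, so that $\dim \mathfrak{I}(SO(M,g),\hat g) \ge \dim \mathfrak{I}_p(SO(M,g),\hat g)$. Then part (c) of the theorem, which asserts $\mathfrak{I}_p(SO(M,g),\hat g) \cong \mathfrak{I}(M,g) \oplus \mathbb{R}$, yields $\dim \mathfrak{I}_p(SO(M,g),\hat g) = \dim \mathfrak{I}(M,g) + 1$. Chaining these relations gives
\begin{equation*}
\dim I(SO(M,g),\hat g) = \dim \mathfrak{I}(SO(M,g),\hat g) \ge \dim \mathfrak{I}_p(SO(M,g),\hat g) = \dim \mathfrak{I}(M,g) + 1 = \dim I(M,g) + 1,
\end{equation*}
which is exactly the claimed inequality.

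The hard part is essentially nonexistent: the only point requiring care is the passage between the isometry group and its Lie algebra of infinitesimal isometries, which presupposes that infinitesimal isometries are complete (so that their flows are genuine one-parameter groups of isometries). Since the excerpt already states that $\mathfrak{I}(N,h)$ is the Lie algebra of $I(N,h)$, this subtlety is taken for granted. I would also note that the inequality is generally strict, because $\mathfrak{I}(SO(M,g),\hat g)$ may contain non-projectable infinitesimal isometries not accounted for in $\mathfrak{I}_p(SO(M,g),\hat g)$; the corollary deliberately states only the inequality, and no effort to compute the full isometry algebra is required here.
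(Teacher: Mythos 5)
Your argument is correct and is exactly the intended one: the corollary is stated in the paper without proof precisely because it follows immediately from part (c) of the preceding theorem via the inclusion $\mathfrak{I}_p(SO(M,g),\hat g) \subset \mathfrak{I}(SO(M,g),\hat g)$ and the identification of $\mathfrak{I}(N,h)$ with the Lie algebra of $I(N,h)$, both of which the paper records beforehand. Your closing remarks on completeness of Killing fields and on possible strictness of the inequality are sensible but not needed.
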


\begin{corollary}
If $I(M,g)$ is transitive, then $I(SO(M,g),\hat g)$ is transitive, too.  
\end{corollary}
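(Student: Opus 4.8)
The plan is to exhibit, for an arbitrary pair of points $p,q \in SO(M,g)$, an explicit isometry of $(SO(M,g),\hat g)$ carrying $p$ to $q$, constructed by composing the lift of a base isometry with a fiber rotation. First I would record the two families of isometries that are already available. Theorem~\ref{prop:5_4} supplies, for each $f \in I(M,g)$, the lifted isometry $f^c \in I(SO(M,g),\hat g)$ satisfying $\pi \circ f^c = f \circ \pi$. On the other hand, each element of the structure group acts by an isometry: for $\mu \in SO(2)$ the right translation $R_\mu : p \mapsto p\mu$ preserves $\hat g$. Indeed, $R_\mu$ maps fibers to fibers and preserves the principal connection $\mathcal{H}$; its restriction to $\mathcal{H}$ is a $g$-isometry because $\hat g|_{\mathcal{H}}$ is the pullback of $g$ via $d\pi$; the fundamental field $\partial_\varphi$ is invariant under the right action; and $\hat g(\partial_\varphi,\partial_\varphi) = 1/K^2$ is constant along each fiber. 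Equivalently, $R_{r(s)}$ is the time-$s$ flow of the vertical infinitesimal isometry $\partial_\varphi$ identified in part a) of the preceding theorem.

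Next I would carry out the transitivity argument. Given $p$ and $q$, put $x = \pi(p)$ and $y = \pi(q)$. Since $I(M,g)$ is transitive, choose $f \in I(M,g)$ with $f(x) = y$; then $\pi(f^c(p)) = f(\pi(p)) = f(x) = y$, so $f^c(p)$ and $q$ both lie in the fiber $\pi^{-1}(y)$. Because $\pi : SO(M,g) \to M$ is a principal $SO(2)$-bundle, this fiber is a single orbit of the right $SO(2)$-action, so there is $\mu \in SO(2)$ with $R_\mu(f^c(p)) = f^c(p)\,\mu = q$. The composite $R_\mu \circ f^c$ is then an isometry of $\hat g$ sending $p$ to $q$, which establishes transitivity of $I(SO(M,g),\hat g)$.

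The only step demanding care is the verification that the fiber rotations $R_\mu$ are genuine isometries; this is exactly where the $SO(2)$-invariance of the entire construction (equivalently, part a) of the last theorem) enters, and it is the one nonformal ingredient, the remainder being the standard principal-bundle fact that each fiber is a single structure-group orbit together with the functoriality of the lift from Theorem~\ref{prop:5_4}. Under the running assumption $K \ne 0$ the argument applies on all of $SO(M,g)$; if instead $\Sigma \ne \emptyset$, one repeats the identical reasoning on $SO(M,g) \setminus \hat\Sigma$, using that every isometry $f$ preserves $\Sigma$ and hence $f^c$ preserves $SO(M,g) \setminus \hat\Sigma$.
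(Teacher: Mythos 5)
Your argument is correct and is essentially the proof the paper leaves implicit: lift a base isometry via Theorem~\ref{prop:5_4} to move between fibers, then use the right $SO(2)$-action (the flow of the vertical Killing field $\partial_\varphi$, whose isometry property follows from the $SO(2)$-invariance of $\mathcal{H}$, of $\partial_\varphi$, and of $K\circ\pi$) to move within a fiber. Your verification that $R_\mu$ preserves $\hat g$ is the one ingredient the paper does not spell out, and you supply it correctly.
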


\begin{remark}
It is known, that the isometry group of a simply connected Riemannian manifold $(M,g)$ of constant sectional curvature acts transitively on $M$ and moreover the induced action on the total space of orthonormal frames is simply transitive. 
From this follows that, for such a manifold, the total space of orthonormal frames is diffeomorphic to the Lie group $I(M)$ of the isometries of $(M,g)$. 
From Theorem~\ref{prop:5_4} it follows that for a simply connected two-dimensional $(M,g)$ of constant nonzero curvature, the $I(M,g)$ acts on $SO(M,g) \cong I(M,g)$ by the isometries with respect to the Wagner lift metric $\hat g$. Thus, $(SO(M,g),\hat g)$ is a Lie group endowed with a left-invariant metric.        
If $(M,g)$ is the sphere, then $SO(M,g)$ is diffeomorphic to $SO(3)$ with the left-invariant metric of constant curvature induced by the double covering $\mathbb{S}^3 \to SO(3)$. If $(M,g)$ is the hyperbolic plane, the metric $\hat g$ does not have constant curvature (\cite{Arteaga-Malakhaltsev}, Sec.~6).   
\end{remark}

\label{sec:3}
\section{Geodesics of the Wagner lift and the singular set}
\label{sec:4}
\subsection{Geodesics of the Wagner lift and their projections}
\label{subsec:4-1}

In \cite{Arteaga-Malakhaltsev} we have proved the following statement which establish relation between geodesics of $(M,g)$ and $(SO(M,g),\hat g)$: 
\begin{theorem}[\cite{Arteaga-Malakhaltsev}, Theorem 5.3]
\label{thm:5_3_0}
Let  $\hat\gamma(t)$ be a  geodesic of the  metric $\hat g$ on $SO(M,g)$, $\gamma(t)$  its projection on $M$, $t \in [0,a]$. Then
\begin{enumerate}
\item 
There holds the equation:
\begin{equation}
\hat g\left(\mathcal{E}_3,\frac{d\hat\gamma}{dt}(t)\right) = C K(\gamma(t)), \text{where $C$ is constant.}\notag
\end{equation}
\item 
The curve $\gamma$ satisfies the differential equation 
\begin{equation}
\nabla_{\frac{d\gamma}{dt}}\frac{d\gamma}{dt} = C K J (\dot\gamma) - C^2 K grad K, 
\label{eq:2_28}
\end{equation} 
where $J$ is the operator of the complex structure on $M$ associated with the metric $g$.
\item 
If $\frac{d\hat\gamma}{dt}(t)$ is horizontal at $t_0$, then  $\hat \gamma$ is a horizontal curve, that is $\hat\gamma$ is tangent to $\mathcal{H}$ for all  $t$,
and  $\gamma$ is a geodesic of the metric $g$ on $M$.
\end{enumerate}
\end{theorem}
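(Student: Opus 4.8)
The plan is to work entirely in the $\hat g$-orthonormal frame $\{\mathcal{E}_1,\mathcal{E}_2,\mathcal{E}_3\}$ of \eqref{eq:4_18}, reducing the statement to an algebraic manipulation of the connection coefficients $\hat\Gamma^k_{ij}$ listed in Theorem~\ref{prop:5_2}. Writing the velocity as $\dot{\hat\gamma}=u^i\mathcal{E}_i$, the equation $\hat\nabla_{\dot{\hat\gamma}}\dot{\hat\gamma}=0$ becomes the first-order system $\dot u^j+\hat\Gamma^j_{ik}u^iu^k=0$, $j=1,2,3$. Two observations fix the dictionary with the base: since the frame is $\hat g$-orthonormal, $\hat g(\mathcal{E}_3,\dot{\hat\gamma})=u^3$; and since $d\pi(\mathcal{E}_1)=e_1$, $d\pi(\mathcal{E}_2)=e_2$, $d\pi(\mathcal{E}_3)=0$, the projected velocity is $\dot\gamma=u^1e_1+u^2e_2$, so that $\dot K=(e_1K)u^1+(e_2K)u^2$ along $\gamma$.

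For statement (1) I would use that $\partial_\varphi=K^{-1}\mathcal{E}_3$ is a Killing field of $\hat g$ (it generates the isometric right $SO(2)$-action, cf.\ the vertical infinitesimal isometries of the preceding section), so the momentum $\hat g(\partial_\varphi,\dot{\hat\gamma})=K^{-1}u^3$ is constant, say $C$; this gives $\hat g(\mathcal{E}_3,\dot{\hat\gamma})=u^3=CK$ at once. The same conclusion can be read off the $j=3$ equation directly: completing $\hat\Gamma^3_{ik}$ from Theorem~\ref{prop:5_2} by the rule $\hat\Gamma^b_{ac}=-\hat\Gamma^c_{ab}$, the $u^1u^2$ contributions cancel ($\hat\Gamma^3_{12}+\hat\Gamma^3_{21}=\tfrac12-\tfrac12=0$) and one is left with $\dot u^3=u^3\big(\tfrac{e_1K}{K}u^1+\tfrac{e_2K}{K}u^2\big)=u^3\,\dot K/K$, which integrates to $u^3=CK$.

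For (2), the heart of the proof, I would compute $\nabla_{\dot\gamma}\dot\gamma$ on $M$ in the frame $\{e_1,e_2\}$, obtaining $\nabla_{\dot\gamma}\dot\gamma=(\dot u^c+\Gamma^c_{ba}u^au^b)e_c$ with $a,b\in\{1,2\}$, where the base coefficients are recovered from the structure functions via the formula $\Gamma^{k}_{ij}=\tfrac12(c^{k}_{ij}+c^{j}_{ki}+c^{i}_{kj})$ together with metric compatibility $\Gamma^c_{ab}=-\Gamma^b_{ac}$, giving e.g.\ $\Gamma^2_{11}=-c^1_{12}$ and $\Gamma^2_{21}=-c^2_{12}$. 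Substituting the $j=1,2$ geodesic equations for $\dot u^1,\dot u^2$, the crucial cancellation is that all terms involving $c^1_{12},c^2_{12}$ drop out against the $\Gamma^c_{ba}u^au^b$ terms, leaving only the $e_1$-component $-u^2u^3-\tfrac{e_1K}{K}(u^3)^2$ and the $e_2$-component $u^1u^3-\tfrac{e_2K}{K}(u^3)^2$. It then remains to recognize the right-hand side of \eqref{eq:2_28} in the same frame: with $Je_1=e_2$, $Je_2=-e_1$ one has $J(\dot\gamma)=u^1e_2-u^2e_1$ and $\operatorname{grad}K=(e_1K)e_1+(e_2K)e_2$, so feeding in $u^3=CK$ from (1) turns $CKJ(\dot\gamma)-C^2K\operatorname{grad}K$ into exactly these two components, which completes (2).

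Finally (3) is immediate from (1): if $\dot{\hat\gamma}$ is horizontal at $t_0$ then $u^3(t_0)=\hat g(\mathcal{E}_3,\dot{\hat\gamma})(t_0)=0$, and since $K\neq0$ on $M\setminus\Sigma$ the constant $C$ must vanish; hence $u^3\equiv0$, i.e.\ $\hat\gamma$ is horizontal for all $t$, and the right-hand side of \eqref{eq:2_28} vanishes identically, so $\gamma$ is a geodesic of $(M,g)$. The main obstacle is the bookkeeping in step (2): one must correctly complete all three antisymmetric blocks of $\hat\Gamma^k_{ij}$ from Theorem~\ref{prop:5_2} and verify that the structure-function terms cancel precisely, leaving the curvature terms untouched; a single sign or index slip would be fatal, so I would first organize the three blocks $A^{(a)}_{bc}=\hat\Gamma^b_{ac}$ as antisymmetric $3\times3$ matrices before substituting.
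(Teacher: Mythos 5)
Your proposal is correct and follows essentially the same route as the paper: it reconstructs the frame system \eqref{eq:5_28}--\eqref{eq:5_30} of Theorem~\ref{thm:5_2_0} from the connection coefficients of Theorem~\ref{prop:5_2}, integrates the third equation to get $Q^3=CK$, and compares the remaining two with $\nabla_{\dot\gamma}\dot\gamma$ on the base — exactly the derivation the paper cites from \cite{Arteaga-Malakhaltsev}. The Killing-field argument you offer for part (1) is a valid shortcut that the paper itself invokes later (see \eqref{eq:3_310}), and all the sign and index checks in your step (2) come out right.
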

 
The proof of this theorem is based on the following fact which was also proved in 
\cite{Arteaga-Malakhaltsev}:
\begin{theorem}[\cite{Arteaga-Malakhaltsev}, Theorem~5.2]
\label{thm:5_2_0}
Let $\hat\nabla$ be the Levi-Civita connection of $\hat g$.
Let $\hat\gamma$ be a geodesic of the connection  $\hat \nabla$ on $SO(M,g)$, and  
\begin{equation}
\frac{d}{dt}\hat\gamma(t) = Q^i(t) \mathcal{E}_i |_{\gamma(t)}. \notag
\end{equation}
Then the functions   $Q^i(t)$ satisfy the equations 
\begin{eqnarray}
\frac{dQ^1}{dt} + c^{\hphantom{1}1}_{12} Q^1 Q^2 + c^{\hphantom{1}2}_{12} (Q^2)^2 + Q^2 Q^3 + \frac{e_1 K}{K}(Q^3)^2 = 0,
\label{eq:5_28}
\\ 
\frac{dQ^2}{dt} - c^{\hphantom{1}1}_{12} (Q^1)^2 - c^{\hphantom{1}2}_{12} Q^1 Q^2 - Q^1 Q^3 + \frac{e_2 K}{K}(Q^3)^2 = 0,
\label{eq:5_29}
\\
\frac{dQ^3}{dt} - \frac{e_1 K}{K} Q^1 Q^3 - \frac{e_2 K}{K} Q^2 Q^3  = 0.
\label{eq:5_30}
\end{eqnarray} 
\end{theorem}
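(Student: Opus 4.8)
The plan is to write the geodesic equation $\hat\nabla_{\dot{\hat\gamma}}\dot{\hat\gamma}=0$ directly in the moving frame $\{\mathcal{E}_1,\mathcal{E}_2,\mathcal{E}_3\}$ of \eqref{eq:4_18}, rather than in local coordinates, and then read off the three components. Denoting the connection coefficients of $\hat\nabla$ in this frame by $\hat\Gamma^{\hphantom{1}k}_{ij}$, so that $\hat\nabla_{\mathcal{E}_i}\mathcal{E}_j=\hat\Gamma^{\hphantom{1}k}_{ij}\mathcal{E}_k$, and writing $\dot{\hat\gamma}=Q^i\mathcal{E}_i$, the derivation property of $\hat\nabla$ gives
\begin{equation*}
\hat\nabla_{\dot{\hat\gamma}}\dot{\hat\gamma}
= \hat\nabla_{Q^i\mathcal{E}_i}\bigl(Q^j\mathcal{E}_j\bigr)
= \Bigl(\frac{dQ^k}{dt}+\hat\Gamma^{\hphantom{1}k}_{ij}Q^iQ^j\Bigr)\mathcal{E}_k,
\end{equation*}
where $\frac{dQ^k}{dt}=\dot{\hat\gamma}(Q^k)=Q^i\mathcal{E}_i(Q^k)$ is the derivative of the component function along $\hat\gamma$. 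Since $\{\mathcal{E}_k\}$ is a frame, the vanishing of the left-hand side is equivalent to the three scalar equations $\frac{dQ^k}{dt}+\hat\Gamma^{\hphantom{1}k}_{ij}Q^iQ^j=0$, $k=\overline{1,3}$. This reduces the theorem to substituting the already-computed connection coefficients.

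First I would note that, as $Q^iQ^j$ is symmetric in $i$ and $j$, only the symmetric part $\tfrac12(\hat\Gamma^{\hphantom{1}k}_{ij}+\hat\Gamma^{\hphantom{1}k}_{ji})$ of the coefficients contributes to each equation, while the antisymmetric part in $i,j$ (which only reflects the nonholonomy of $\{\mathcal{E}_i\}$) drops out automatically. I would then assemble the full array of $\hat\Gamma^{\hphantom{1}k}_{ij}$ from Theorem~\ref{prop:5_2}: the entries listed there, together with the relation $\hat\Gamma^{\hphantom{1}b}_{ac}=-\hat\Gamma^{\hphantom{1}c}_{ab}$ (which in particular forces $\hat\Gamma^{\hphantom{1}b}_{ab}=0$ with no summation), determine every coefficient. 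Substituting these into $\frac{dQ^k}{dt}+\hat\Gamma^{\hphantom{1}k}_{ij}Q^iQ^j=0$ for $k=1,2,3$ and collecting terms is then expected to yield exactly \eqref{eq:5_28}, \eqref{eq:5_29} and \eqref{eq:5_30}.

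The computation is entirely mechanical, so the only real care lies in the bookkeeping of the off-diagonal coefficients reconstructed via antisymmetry and in tracking how the half-integer entries recombine. For instance, in the $k=1$ equation the pair $\hat\Gamma^{\hphantom{1}1}_{23}=\hat\Gamma^{\hphantom{1}1}_{32}=\tfrac12$ combines to give the integer coefficient $1$ on the mixed term $Q^2Q^3$, and similarly $\hat\Gamma^{\hphantom{1}2}_{13}=\hat\Gamma^{\hphantom{1}2}_{31}=-\tfrac12$ produces the coefficient $-1$ on $Q^1Q^3$ in the $k=2$ equation; by contrast, in the $k=3$ equation the contributions $\hat\Gamma^{\hphantom{1}3}_{12}=\tfrac12$ and $\hat\Gamma^{\hphantom{1}3}_{21}=-\tfrac12$ cancel, leaving only the two terms with $e_aK/K$. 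I expect no conceptual obstacle here; the principal risk is a pure index-bookkeeping slip, which I would guard against by verifying that, for each fixed first (direction) index, the matrix of coefficients in the remaining two indices is antisymmetric, as demanded by the compatibility of $\hat\nabla$ with $\hat g$ in the orthonormal frame $\{\mathcal{E}_i\}$.
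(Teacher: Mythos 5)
Your proposal is correct: writing the geodesic equation in the orthonormal moving frame as $\frac{dQ^k}{dt}+\hat\Gamma^{\hphantom{1}k}_{ij}Q^iQ^j=0$ and substituting the coefficients of Theorem~\ref{prop:5_2} (completed by the antisymmetry $\hat\Gamma^{\hphantom{1}b}_{ac}=-\hat\Gamma^{\hphantom{1}c}_{ab}$) does yield \eqref{eq:5_28}--\eqref{eq:5_30}, and your accounting of which half-integer entries combine and which cancel checks out. The paper itself defers the proof to \cite{Arteaga-Malakhaltsev}, but this is evidently the intended derivation, resting on exactly the same ingredients the paper supplies.
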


In the present section we will establish how the geodesics of $(SO(M,g),\hat g)$ behave with respect to the singular set $\hat\Sigma$ (see subsection~\ref{subsec:2_4}). Recall that, under our assumption, $\Sigma$ is a one-dimensional submanifold of  $M$, so $\hat\Sigma$ is a two-dimensional submanifold in $SO(M,g)$. We start with the following statement which is in some sense converse to Theorem~\ref{thm:5_3_0}. 

\begin{theorem}
\label{thm:3_3}
Let $\gamma : I=(a,b) \to M$ be a solution for the equation \eqref{eq:2_28}. Let $I_1 = \gamma^{-1}(M \setminus \Sigma)$. Then, for each $t_0 \in I$ and for each  $p \in \pi^{-1}(\gamma(t_0))$, there exists a curve $\hat\gamma(t)$ such that $\hat\gamma(t_0)=p$,  $\pi(\hat\gamma(t))=\gamma(t)$, and $\hat\gamma(t)$ restricted to the open set $I_1$ is a geodesic of the metric $\hat g$ on $SO(M,g) \setminus \hat \Sigma$. 
\end{theorem}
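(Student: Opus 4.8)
The plan is to read Theorem~\ref{thm:5_3_0}(1) backwards. That first integral says that along any geodesic $\hat\gamma$ of $\hat g$ the quantity $Q^3:=\hat g(\mathcal E_3,\dot{\hat\gamma})$ equals $CK$, with $C$ the \emph{same} constant that appears as the coefficient in \eqref{eq:2_28} (since $\{\mathcal E_i\}$ is $\hat g$-orthonormal, $\hat g(\mathcal E_3,\dot{\hat\gamma})$ is simply the third frame component of the velocity). So, given a solution $\gamma$ of \eqref{eq:2_28} with its constant $C$, I would construct the lift by \emph{prescribing exactly this vertical velocity}. Concretely, define $\hat\gamma$ as the solution of the first-order ODE on $SO(M,g)$
\begin{equation*}
\frac{d\hat\gamma}{dt} = (\dot\gamma)^h + C\,K^2\,\partial_\varphi, \qquad \hat\gamma(t_0)=p,
\end{equation*}
where $(\dot\gamma)^h$ is the horizontal lift of $\dot\gamma$ with respect to $\mathcal H$. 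The right-hand side is globally and smoothly defined along the fibre over $\gamma(t)$ (it uses only the connection $\mathcal H$, the global fundamental field $\partial_\varphi$, and the functions $K$, $C$), so no trivialization is needed; existence and uniqueness on the whole interval $I$ hold because the vertical part of the equation is a bounded scalar ODE for the angle on the compact fibre $SO(2)$. By construction $\pi\circ\hat\gamma=\gamma$, and $\hat\gamma$ is smooth on all of $I$, even across $\hat\Sigma$, although the geodesic property is only claimed on $I_1$.

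Next I would read off the frame components of $\dot{\hat\gamma}$ in the orthonormal frame $\{\mathcal E_1,\mathcal E_2,\mathcal E_3\}$ of \eqref{eq:4_18}. Since $\mathcal E_a=E^h_a$ for $a=1,2$ and $C K^2\partial_\varphi=CK\,\mathcal E_3$, writing $\dot\gamma=Q^1e_1+Q^2e_2$ gives $\dot{\hat\gamma}=Q^1\mathcal E_1+Q^2\mathcal E_2+Q^3\mathcal E_3$ with $Q^3=CK$. On $I_1$ the curvature $K$ does not vanish, so the quantities $e_aK/K$ are defined, and it remains to verify that $(Q^1,Q^2,Q^3)$ satisfies the system \eqref{eq:5_28}--\eqref{eq:5_30}. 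By the very derivation of Theorem~\ref{thm:5_2_0}, these three equations are nothing but $\hat\nabla_{\dot{\hat\gamma}}\dot{\hat\gamma}=0$ expanded in the frame $\{\mathcal E_i\}$ with the connection coefficients of Theorem~\ref{prop:5_2}, so they are \emph{equivalent} to $\hat\gamma|_{I_1}$ being a geodesic; I would state this equivalence explicitly, since we are using the converse direction.

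The verification itself is short. Equation \eqref{eq:5_30} holds identically: with $Q^3=CK$ one has $dQ^3/dt=C\,dK/dt=C(Q^1e_1K+Q^2e_2K)$, which exactly cancels the remaining two terms because $\gamma$ is the projection (so $dK/dt=dK(\dot\gamma)$ decomposes correctly). For \eqref{eq:5_28}--\eqref{eq:5_29} I would write out \eqref{eq:2_28} in components relative to $\{e_1,e_2\}$, using $J(\dot\gamma)=Q^1e_2-Q^2e_1$, $\operatorname{grad}K=(e_1K)e_1+(e_2K)e_2$, and $\nabla_{\dot\gamma}\dot\gamma=(\dot Q^c+Q^aQ^b\Gamma^{c}_{ab})e_c$. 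Substituting $Q^3=CK$ into \eqref{eq:5_28}--\eqref{eq:5_29} and moving the terms involving $Q^3$ to the right-hand side reproduces precisely the source $CKJ(\dot\gamma)-C^2K\operatorname{grad}K$ of \eqref{eq:2_28}, while the structure-function terms $c^{1}_{12}Q^1Q^2+c^{2}_{12}(Q^2)^2$ (resp. their analogues in \eqref{eq:5_29}) coincide with $Q^aQ^b\Gamma^{c}_{ab}$ once one invokes the relations $\Gamma^{2}_{11}=-c^{1}_{12}$, $\Gamma^{2}_{21}=-c^{2}_{12}$ coming from comparing \eqref{eq:3_6} with \eqref{eq:2_13}, together with the antisymmetry $\Gamma^{c}_{ab}=-\Gamma^{b}_{ac}$. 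Hence \eqref{eq:5_28}--\eqref{eq:5_29} are exactly the two components of the hypothesis \eqref{eq:2_28}, the whole system holds on $I_1$, and $\hat\gamma|_{I_1}$ is a geodesic.

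I do not expect a deep obstacle here: existence of the lift is an ODE, and the geodesic property is forced by the first integral $Q^3=CK$. The only genuine care needed is bookkeeping, namely ensuring that the \emph{same} constant $C$ governs both the vertical drift and the source term of \eqref{eq:2_28}, and that the translation between the Christoffel symbols $\Gamma^{c}_{ab}$ of \eqref{eq:2_28} and the structure functions $c^{a}_{12}$ of \eqref{eq:5_28}--\eqref{eq:5_29} is carried out with correct signs. The conceptually important observation, which makes the clean statement possible, is that the defining ODE is globally defined and smooth over all of $M$, so $\hat\gamma$ extends across $\hat\Sigma$ as a well-defined lift on all of $I$, even though we only assert that it is $\hat g$-geodesic on $I_1$.
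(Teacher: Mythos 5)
Your proposal is correct and follows essentially the same route as the paper: the paper's lift $\hat\gamma(t)=\gamma^h(t)\,r(\alpha(t))$ with $\alpha(t)=C\int_{t_0}^tK^2(\gamma(s))\,ds$ is exactly the integral curve of your ODE $\dot{\hat\gamma}=(\dot\gamma)^h+CK^2\partial_\varphi$, and the paper likewise concludes by substituting $Q^3=CK$ into the geodesic system \eqref{eq:5_28}--\eqref{eq:5_30} and matching the remaining two equations with the components of \eqref{eq:2_28}. Your explicit remark that the converse direction of Theorem~\ref{thm:5_2_0} is being invoked is a point the paper leaves implicit, but the argument is the same.
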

\begin{proof}
For any $p \in \pi^{-1}(t_0)$ we will present a curve $\hat\gamma : (a,b) \to SO(M,g)$ with the required properties. Let $\gamma^h(t)$ be the horizontal lift of $\gamma(t)$ such that $\gamma^h(t_0) = p$, and set $\alpha(t)=C \int^t_{t_0} K^2(\gamma(s))ds$, where $C$ is the constant in \eqref{eq:2_28}. We set 
\begin{equation}
\hat\gamma(t) = \gamma^h(t) r(\alpha(t)).  
\end{equation}
It is clear that $\pi(\hat\gamma(t)) = \gamma(t)$ and $\hat\gamma(t_0) = p$. We will prove that restricted to $I_1$ the curve $\hat\gamma(t)$ is a geodesic of $\hat g$. 

First note that $I_1$ is open and $\hat\gamma : I_1 \to SO(M,g)\setminus \hat\Sigma$.
We will use the following well-known statement: \emph{If $P \to M$ is a principal $G$-bundle, $\delta : (a,b) \to P$, $g : (a,b) \to G$, then }
\begin{equation*}
\frac{d}{dt} (\delta(t)g(t)) = dR_{g(t)}\left(\frac{d\delta}{dt}\right) + \sigma\left(dL_{g^{-1}(t)}\frac{dg}{dt}\right),  
\label{eq:3_100}
\end{equation*}
\emph{where $\sigma(a)$ is the fundamental vector field on $P$ corresponding to the vector $a$ in the Lie algebra $\mathfrak{g}$ of the Lie group $G$}.
Note that in our case, $P = SO(M,g)$,   $G = SO(2)$,  and if we take $g(t) = r(\alpha(t))$, then one can easily find that 
\begin{equation*}
\sigma\left(dL_{g^{-1}(t)}\frac{dg}{dt}\right) = \frac{d\alpha}{dt} \partial_\varphi = C K^2(\gamma(t)) \partial_\varphi. 
\label{eq:3_101}
\end{equation*}
From this follows that 
\begin{equation}
\frac{d}{dt}\hat\gamma = dR_{r(\alpha(t))}\left(\frac{d\gamma^h}{dt}\right) + C K^2(\gamma(t))\partial_\varphi.
\label{eq:3_103}
\end{equation}
Hence follows that $Q^3 = C K(\gamma(t))$.

Without loss of generality we may assume that the curve $\gamma : I_1 \to M \setminus \Sigma$ takes values in an open set $U$ on which an orthonormal frame field $\{e_1,e_2\}$ exists, and so on $\pi^{-1}(U)$ there is the orthonormal frame field $\{\mathcal{E}_i\}$, $i=\overline{1,3}$, defined by \eqref{eq:4_18}.  
Then 
\begin{equation*}
\frac{d}{dt}\gamma^h = Q^1(t) \mathcal{E}_1|_{\gamma^h(t)} + Q^2(t) \mathcal{E}_2|_{\gamma^h(t)}.
\label{eq:3_104}
\end{equation*}
Since $\gamma(t)=\pi\gamma^h(t)$ satisfies \eqref{eq:2_28} and 
\begin{equation*}
\frac{d}{dt}\gamma = Q^1(t) e_1|_{\gamma^h(t)} + Q^2(t) e_2|_{\gamma^h(t)},
\label{eq:3_105}
\end{equation*}
we get that 
\begin{eqnarray}
\frac{dQ^1}{dt} + c^{\hphantom{1}1}_{12} Q^1 Q^2 + c^{\hphantom{1}2}_{12} (Q^2)^2 + C K Q^2  + C^2 K e_1 K = 0,
\\ 
\frac{dQ^2}{dt} - c^{\hphantom{1}1}_{12} (Q^1)^2 - c^{\hphantom{1}2}_{12} Q^1 Q^2 - C K Q^1  + C^2 K e_2 K = 0.
\label{eq:3_106}
\end{eqnarray} 
The vector fields $\mathcal{E}_1$, $\mathcal{E}_2$ are invariant under right shifts, therefore we have that
\begin{equation*}
\frac{d}{dt}\hat\gamma = 
Q^1(t) \mathcal{E}_1|_{\hat\gamma(t)} + Q^2(t) \mathcal{E}_2|_{\hat\gamma(t)} + C K^2(\gamma(t))\partial_\varphi.
\label{eq:3_107}
\end{equation*}
Now one can easily check that the equations \eqref{eq:5_28}--\eqref{eq:5_30} hold true, therefore $\hat\gamma(t)$  is a geodesic for $t \in I_1$. 
\end{proof}

\begin{corollary}
Let $\gamma: I=(a,b) \to M$ be a curve. 
If $\gamma(t)$ is  a geodesic of $(M,g)$, then, for any $t_0 \in I_1$ and any $p \in \pi^{-1}(\gamma(t_0))$, the horizontal lift $\gamma^h(t)$ passing through $p$ is a geodesic of the metric $\hat g$ on $SO(M,g) \setminus \hat\Sigma$ for  $t \in I_1$. 
\end{corollary}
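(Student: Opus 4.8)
The plan is to obtain this as an immediate consequence of Theorem~\ref{thm:3_3}, by recognizing a geodesic of $(M,g)$ as the special case $C=0$ of equation~\eqref{eq:2_28}.

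First I would observe that setting $C=0$ in \eqref{eq:2_28} yields $\nabla_{\dot\gamma}\dot\gamma = 0$, which is precisely the geodesic equation of $(M,g)$. Hence if $\gamma$ is a geodesic of $(M,g)$, then $\gamma$ is a solution of \eqref{eq:2_28} with the constant $C$ taken to be zero, and Theorem~\ref{thm:3_3} applies to it.

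Next I would apply that theorem to this $\gamma$, with $C=0$ and the given $t_0\in I_1$, $p\in\pi^{-1}(\gamma(t_0))$. The theorem constructs the curve $\hat\gamma(t)=\gamma^h(t)\,r(\alpha(t))$, where $\alpha(t)=C\int_{t_0}^{t}K^2(\gamma(s))\,ds$. Since $C=0$ we have $\alpha(t)\equiv 0$, so $r(\alpha(t))=\mathrm{Id}$ for all $t$ and therefore $\hat\gamma(t)=\gamma^h(t)$ is exactly the horizontal lift of $\gamma$ through $p$. I would note in passing that $\gamma^h$ is well defined on all of $I$, since the horizontal distribution $\mathcal{H}$ is defined everywhere on $SO(M,g)$; it takes values in $SO(M,g)\setminus\hat\Sigma$ precisely on $I_1=\gamma^{-1}(M\setminus\Sigma)$, which is why the geodesic conclusion is asserted only there.

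Finally, Theorem~\ref{thm:3_3} asserts that the restriction of $\hat\gamma$ to $I_1$ is a geodesic of $\hat g$ on $SO(M,g)\setminus\hat\Sigma$. As $\hat\gamma=\gamma^h$ in this case, this is exactly the assertion of the corollary. The argument is essentially immediate once the identification $C=0$ is made; the only point deserving a word of care is checking that the curve produced by Theorem~\ref{thm:3_3} collapses to the horizontal lift in this degenerate case, and no genuine obstacle arises.
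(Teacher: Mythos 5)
Your proof is correct and follows essentially the same route as the paper's: identify a geodesic of $(M,g)$ as a solution of \eqref{eq:2_28} with $C=0$, note that the curve $\hat\gamma(t)=\gamma^h(t)r(\alpha(t))$ constructed in the proof of Theorem~\ref{thm:3_3} then has $\alpha\equiv 0$, and conclude $\hat\gamma=\gamma^h$. The only difference is that you spell out the (harmless) point about $\gamma^h$ being defined on all of $I$ while the geodesic property holds only on $I_1$, which the paper leaves implicit.
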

\begin{proof}
The curve $\gamma(t)$ satisfies the equation \eqref{eq:2_28} with $C=0$. 
Then for the geodesic $\hat\gamma(t)=\gamma^h(t)r(\alpha(t))$ constructed in the proof of Theorem~\ref{thm:3_3}  we have $\alpha(t)=0$, hence follows that $\hat\gamma(t) = \gamma^h(t)$ for $t \in I$. 
\end{proof}

\begin{definition}
A curve $\hat\gamma : (a,b) \to SO(M,g)$ is called \emph{a generalized geodesic} of the metric $\hat g$  if $\hat\gamma(t)$ is a geodesic for $t \in \hat\gamma^{-1}(SO(M,g)\setminus \hat{\Sigma})$. 
\end{definition}

\begin{corollary}
For any $p \in \hat\Sigma$, and for any vector $V \in \mathcal{H}_p$, there exists a generalized geodesic $\hat\gamma(t)$, $t \in (-\varepsilon,\varepsilon)$, of the metric $\hat g$ such that $\hat\gamma(0)=p$ and $\frac{d\hat\gamma}{dt}(0)=V$.
\end{corollary}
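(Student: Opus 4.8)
The plan is to reduce the statement to the construction already carried out in the proof of Theorem~\ref{thm:3_3}, specialized to the constant $C=0$. With $C=0$ the differential equation \eqref{eq:2_28} becomes the ordinary geodesic equation of $(M,g)$, so the generalized geodesic I produce will simply be the horizontal lift of a genuine geodesic of the base. The only thing that must be checked with care is that the velocity of this lift at the singular point equals the prescribed horizontal vector $V$.

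First I would set $x_0=\pi(p)$ and $v=d\pi_p(V)\in T_{x_0}M$; this is meaningful because $V$ is horizontal, so $d\pi_p$ maps it isomorphically onto $v$, and by hypothesis $x_0\in\Sigma$, i.e. $K(x_0)=0$. Next I would take $\gamma:(-\varepsilon,\varepsilon)\to M$ to be the geodesic of $(M,g)$ with $\gamma(0)=x_0$ and $\dot\gamma(0)=v$, whose local existence and uniqueness is standard; this $\gamma$ is exactly the solution of \eqref{eq:2_28} with $C=0$. I would then invoke Theorem~\ref{thm:3_3} with $t_0=0$, the point $p\in\pi^{-1}(\gamma(0))$, and $C=0$, obtaining a curve $\hat\gamma$ with $\hat\gamma(0)=p$, $\pi\hat\gamma=\gamma$, which is a geodesic of $\hat g$ on $I_1=\gamma^{-1}(M\setminus\Sigma)$. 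Since $\alpha(t)=C\int_0^t K^2(\gamma(s))\,ds\equiv 0$ when $C=0$, the curve $\hat\gamma$ coincides with the horizontal lift $\gamma^h$ of $\gamma$ through $p$. Because $\hat\gamma(t)\in\hat\Sigma$ precisely when $\gamma(t)\in\Sigma$, we have $\hat\gamma^{-1}(SO(M,g)\setminus\hat\Sigma)=I_1$, so the fact that $\hat\gamma$ is a geodesic on $I_1$ is exactly the defining property of a generalized geodesic.

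It remains to verify the initial velocity. Evaluating \eqref{eq:3_103} at $t=0$, the term $CK^2(\gamma(0))\partial_\varphi$ vanishes (indeed it vanishes for every $C$, since $K(x_0)=0$) and $r(\alpha(0))=r(0)$ is the identity, so $\dot{\hat\gamma}(0)=\frac{d\gamma^h}{dt}(0)$, which is the horizontal lift of $\dot\gamma(0)=v$ at $p$. As $V$ is the unique horizontal vector at $p$ projecting to $v$, this gives $\dot{\hat\gamma}(0)=V$, as required.

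The main obstacle here is not any hard estimate but the bookkeeping at the singular instant $t=0$: the base point $\gamma(0)=x_0$ lies in $\Sigma$, so $0\notin I_1$, and one must confirm that $\hat\gamma$ is genuinely smooth across $\hat\Sigma$ and that its velocity there lies in $\mathcal{H}_p$. Smoothness is automatic because the horizontal distribution $\mathcal{H}$, arising from the Levi-Civita connection of $g$, is defined and smooth on all of $SO(M,g)$ regardless of whether $K$ vanishes; horizontality of $\dot{\hat\gamma}(0)$ follows from the vanishing of $\dot\alpha(0)=CK^2(x_0)$. I would close with the remark that running the same construction with an arbitrary constant $C$ yields, for the same initial data $(p,V)$, a whole one-parameter family of generalized geodesics through $p$, which exhibits the failure of uniqueness along the singular set $\hat\Sigma$.
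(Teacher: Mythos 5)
Your proof is correct and follows essentially the same route as the paper: both arguments take a solution of \eqref{eq:2_28} through $\pi(p)$ with initial velocity $d\pi_p(V)$, lift it via Theorem~\ref{thm:3_3}, and read off horizontality of the initial velocity from \eqref{eq:3_103}. The only difference is that you fix $C=0$ (so the lift is the plain horizontal lift and horizontality is automatic), whereas the paper applies the construction for a general $C$ and uses $K(\pi(p))=0$ to kill the vertical term --- the very observation you make parenthetically, and which also underlies your closing remark about the one-parameter family of generalized geodesics through $(p,V)$.
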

\begin{proof}
Let $x = \pi(p)$ and $W = d\pi_p(V)$. Find a solution $\gamma$ of \eqref{eq:2_28} defined on $(-\varepsilon,\varepsilon)$ such that $\gamma(0)=x$ and $\frac{d\hat\gamma}{dt}(0)=W$. Now apply Theorem~\ref{thm:3_3} to $\gamma$ and $p$, then we get that  there exists a generalized geodesic $\hat\gamma(t)$ such that $\hat\gamma(0)=p$ and $\pi(\hat\gamma(t))=\gamma(t)$. Finally \eqref{eq:3_103} implies that $\frac{d\hat\gamma}{dt}(0)$ is horizontal, therefore $\frac{d\hat\gamma}{dt}(0)=V$.
\end{proof}

\begin{theorem}
\label{prop:4-1}
Any generalized geodesic $\hat\gamma(t)$ transversal to $\hat{\Sigma}$ intersects $\hat{\Sigma}$ horizontally, this means that, if $\hat\gamma(t_0)$ lies in $\hat\Sigma$, then $\frac{d}{dt}|_{t=t_0}\hat\gamma(t)$ lies in $\mathcal{H}_{\hat\gamma(t_0)}$.
\end{theorem}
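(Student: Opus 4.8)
The plan is to isolate the vertical component of the velocity and to show that it is forced to vanish as the curve meets $\hat\Sigma$, by exploiting the first integral of Theorem~\ref{thm:5_3_0}(1) together with the fact that $K=0$ on $\Sigma$. The guiding observation is that although the moving frame $\{\mathcal{E}_i\}$ degenerates along $\hat\Sigma$, the vertical part of the velocity, when written in the globally defined field $\partial_\varphi$, carries a factor $K^2$ that makes its vanishing transparent.

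First I would fix $t_0$ with $p:=\hat\gamma(t_0)\in\hat\Sigma$, so that $x_0:=\pi(p)=\gamma(t_0)\in\Sigma$ and hence $K(x_0)=0$. Since $\hat\gamma$ is transversal to $\hat\Sigma$ and $\hat\Sigma$ is a hypersurface in $SO(M,g)$, the point $t_0$ is an isolated point of $\hat\gamma^{-1}(\hat\Sigma)$; consequently there is $\delta>0$ such that $\hat\gamma$ restricted to each of $(t_0-\delta,t_0)$ and $(t_0,t_0+\delta)$ takes values in $SO(M,g)\setminus\hat\Sigma$ and is there a genuine geodesic of $\hat g$. On each of these two arcs Theorem~\ref{thm:5_3_0}(1) applies, which is exactly where the transversality hypothesis is used.

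Next I would rewrite the first integral in terms of $\partial_\varphi$. Writing $\frac{d\hat\gamma}{dt}=Q^1\mathcal{E}_1+Q^2\mathcal{E}_2+Q^3\mathcal{E}_3$ in the orthonormal frame \eqref{eq:4_18}, the fields $\mathcal{E}_1,\mathcal{E}_2$ are horizontal while $\mathcal{E}_3=K\partial_\varphi$ is vertical, so the vertical part of the velocity is $Q^3\mathcal{E}_3=Q^3K\,\partial_\varphi$. Because $\{\mathcal{E}_i\}$ is $\hat g$-orthonormal, the first integral reads $Q^3=\hat g(\mathcal{E}_3,\frac{d\hat\gamma}{dt})=C\,K(\gamma(t))$, whence the vertical part equals $C\,K^2(\gamma(t))\,\partial_\varphi$; equivalently this may be read off directly from \eqref{eq:3_103}. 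The crucial feature is that, although $\mathcal{E}_3=K\partial_\varphi\to 0$ along $\hat\Sigma$, the vertical component expressed through $\partial_\varphi$ is the manifestly continuous quantity $C\,K^2(\gamma(t))\,\partial_\varphi$.

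Finally I would pass to the limit $t\to t_0$. As $\gamma(t)\to x_0\in\Sigma$ we have $K(\gamma(t))\to 0$, so the vertical part $C\,K^2(\gamma(t))\,\partial_\varphi$ tends to $0$ as $t\to t_0$ from either side. It remains to know that $\frac{d}{dt}|_{t=t_0}\hat\gamma$ is the two-sided limit of these velocities; this follows because, by uniqueness of geodesics on $SO(M,g)\setminus\hat\Sigma$, on each arc $\hat\gamma$ coincides with a curve of the form $\gamma^h\,r(\alpha)$ furnished by Theorem~\ref{thm:3_3}, and such a curve is smooth across $\hat\Sigma$ since its defining equation \eqref{eq:2_28} is regular on all of $M$, including $\Sigma$. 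Hence the vertical part of $\frac{d}{dt}|_{t=t_0}\hat\gamma$ is $0$, so $\frac{d}{dt}|_{t=t_0}\hat\gamma\in\mathcal{H}_p$, as asserted. I expect the step requiring the most care to be the degeneration of the frame $\{\mathcal{E}_i\}$ at $\hat\Sigma$: one must not argue with $Q^3$ or $\mathcal{E}_3$ in isolation, but instead track the vertical component in the globally defined field $\partial_\varphi$, where the factor $K^2$ renders the vanishing unambiguous.
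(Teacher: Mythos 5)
Your proposal is correct and follows essentially the same route as the paper: both isolate the vertical component of the velocity, use the first integral of Theorem~\ref{thm:5_3_0}(1) together with $\hat g(\partial_\varphi,\partial_\varphi)=1/K^2$ to identify that component as $C\,K^2(\gamma(t))\,\partial_\varphi$ away from $\hat\Sigma$, and conclude by continuity at $t_0$ that it vanishes there. The paper simply takes the continuity of the vertical part of $\frac{d\hat\gamma}{dt}$ for granted (the splitting $TSO(M,g)=\mathcal{H}\oplus V$ is smooth and $\hat\gamma$ is a curve), so your closing appeal to Theorem~\ref{thm:3_3} is unnecessary but harmless.
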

\begin{proof}
Let $\hat\gamma : I = (a,b) \to SO(M)$ be a generalized geodesic of $(SO(M,g),\hat g)$,  
$\gamma = \pi\hat{\gamma}$, and $I_1 = \gamma^{-1}(M \setminus \Sigma) = \hat\gamma^{-1}(SO(M) \setminus \hat\Sigma$. We have 
\begin{equation*}
\frac{d\hat{\gamma}}{dt}(t) = X_h(t) + f(t)\partial_\varphi,
\label{eq:3_117}
\end{equation*}
where $X_h(t)$ is a horizontal vector field and $f(t)$ is a function. 
From Theorem~\ref{thm:5_3_0} (1), using \eqref{eq:4_18}, we get that, for any $t \in I_1$,  
\begin{equation*}
\hat g(\partial_\varphi, \frac{d\hat\gamma}{dt}(t)) = 
\frac{1}{K(\gamma(t))} \hat g(\mathcal{E}_3,\frac{d\hat\gamma}{dt}(t)) = C,
\label{eq:3_115}
\end{equation*}
and, from \eqref{eq:4_3} we get that along $\hat \gamma(t)$ there holds the equality:    
\begin{equation*}
\hat g(\partial_\varphi, \partial_\varphi) =  \frac{1}{K^2(\gamma(t))}. 
\label{eq:3_116}
\end{equation*}
Let $\hat{\gamma}(t)$ intersects $\hat\Sigma$ transversally at $t_0$. Then, there exists $\varepsilon$ such that 
\begin{equation*}
f(t) = \frac{\hat g(\partial_\varphi, \frac{d\hat\gamma}{dt}(t))}{\hat g(\partial_\varphi, \partial_\varphi)} = C K^2(\gamma(t)) 
\label{eq:3_118}
\end{equation*}
holds for all $t \in (t_0 - \varepsilon,t_0 + \varepsilon)$ except for $t_0$. Since both sides are continuous on $(t_0 - \varepsilon,t_0 + \varepsilon)$, we get that $f(t_0)=C K^2(\gamma(t_0)) = 0$, this proves the theorem. 
\end{proof}

\subsection{Lagrangian for solutions of \eqref{eq:2_28}. Geodesic modeling }
\label{subsec:4-2}
Let $(M,g)$ be a Riemannian manifold, $F$ be a closed 2-form on $M$, $F^\#$ be the affinor field such that $g(F^\#X,Y)=F(X,Y)$, and $U$ be a function on $M$. 
Consider  the equation 
\begin{equation}
\nabla_{\frac{d\gamma}{dt}}\frac{d\gamma}{dt} = F^\# \dot\gamma  + \mathop{grad} U, 
\label{eq:3_200}
\end{equation}
where $\nabla$ is the Levi-Civita connection of $g$.  Equations of this type arise in physics, e.\,g., in the theory of the motion of charged particles in electromagnetic fields.  In (\cite{Novikov}, Sect.~5) S.P.\,Novikov constructed a multi-valued Lagrangian whose Euler-Lagrange equations are exactly the equations \eqref{eq:3_200} and developed an analog of the Morse theory for Lagrangians of this type. The theory of geodesic simulation for equations \eqref{eq:3_200} with $U = 1/f$, where $f$ is a nonvanishing function on $M$, was developed in the papers of Ya.L.\,Shapiro, V.A.\,Igoshin, and E.I.\,Yakovlev (one of the first papers devoted to this theory was \cite{ShapiroIgoshinYakovlev}, for more recent results in this direction we refer the reader to \cite{Igoshin1}, \cite{Igoshin2}, \cite{Yakovlev}).  Our equation \eqref{eq:2_28}  is a very partial case of the equation \eqref{eq:3_200}. In this subsection we will  write down the Lagrangian in our partial case and also demonstrate how to apply Shapiro-Igoshin-Yakovlev results to \eqref{eq:2_28}. 

First we repeat S.P.\,Novikov's considerations (\cite{Novikov}, Sect.~5) in our case. We consider the $2$-form $F = K \sigma$, where $\sigma$ is the area $2$-form of the metric tensor field $g$. Note, that, by the Gauss-Bonnet theorem, $F$ represents the Euler class of $M$, so for a compact oriented $M$, the $2$-form $F$ is exact if and only if $M$ is the two-dimensional torus $\mathbb{T}^2$.  If $M$ is not compact, $F$ is exact. 

Consider two points $x_0$ and $x_1$ in $M$ and the space $\Omega$ of paths joining these points and lying inside a region $C \subset M$ such that $F = d\theta$ on $C$. Take the action
\begin{equation}
\mathcal{L}(\gamma) = \int^a_b \frac{1}{2}\left[ g_{\gamma(t)}(\frac{d\gamma}{dt},\frac{d\gamma}{dt}) - C K(\gamma(t)) \theta_{\gamma(t)}(\frac{d\gamma}{dt}) + C^2 K^2(\gamma(t))\right] \,dt.
\label{eq:3_201}
\end{equation} 
Written with respect to local coordinates $(x^i,\dot x^i)$ on $TM$, the Lagrangian of the action has the form
$L = L_0 - L_1 - L_2$, where 
\begin{equation}
L_0(x^k, \dot x^k) =   \frac{1}{2} g_{ij}(x^k) \dot x^i \dot x^j, \quad
L_1(x^k, \dot x^k) = \frac{1}{2} C K(x^k) \theta_i(x^k) \dot x^i, \quad
L_2(x^k) =  - \frac{1}{2}  C^2 K^2(x^k).
\label{eq:3_202}
\end{equation}
The left hand of the Euler-Lagrange equation for $L_0$ is well known to be  
\begin{equation*}
\frac{d}{dt}(\frac{\partial L_0}{\partial \dot x^i}) - \frac{\partial L_0}{\partial x^i} = g_{is}\nabla_{\dot\gamma} \dot\gamma^s.
\label{eq:3_203}
\end{equation*} 
One can easily calculate that 
\begin{multline*}
\frac{d}{dt}(\frac{\partial L_1}{\partial \dot x^i}) - \frac{\partial L_1}{\partial x^i} = 
\frac{1}{2}(\frac{d}{dt}[\theta_i(\gamma(t))] - \partial_i\theta_j(\gamma(t)) \dot \gamma^j) = 
\\
\frac{1}{2}( \partial_j\theta_i(\gamma(t)) - \partial_i\theta_j(\gamma(t))) \dot \gamma^j) = C K(\gamma(t)) \sigma_{ji}\dot\gamma^j,
\label{eq:3_204}
\end{multline*}
and 
\begin{equation*}
\frac{d}{dt}(\frac{\partial L_2}{\partial \dot x^i}) - \frac{\partial L_2}{\partial x^i} = 
- C^2 K(\gamma(t)) \partial_i K(\gamma(t)) 
\label{eq:3_205}
\end{equation*}
Hence follows that the Euler-Lagrange equations for $L$ are 
\begin{equation*}
g_{is}\nabla_{\dot\gamma} \dot\gamma^s - C K \sigma_{ji}\dot \gamma^j + C^2 K \partial_i K = 0.
\label{eq:3_206}
\end{equation*}
Now note that $g^{mi}\sigma_{ji} = J^m_j$ and $\mathop{grad} K = g^{mi} \partial_i K \partial_m$. Thus, the Euler-Lagrange equations for the Lagrangian \eqref{eq:3_202} are 
\begin{equation*}
\nabla_{\dot\gamma} \dot\gamma - C K J(\dot \gamma) + C^2 K \mathop{grad} K = 0,
\label{eq:3_207}
\end{equation*}
which coincide with the equations \eqref{eq:2_28}. 
Thus we have proved that \emph{the action \eqref{eq:3_201} is a multivalued functional whose extremals are the solutions to the equation \eqref{eq:2_28}}.

Now we will briefly describe the construction of geodesic modeling  from \cite{ShapiroIgoshinYakovlev}. We consider the equation \eqref{eq:3_200}. Suppose that the form $F$ satisfies the ``quantization condition'': this means that there exists a certain $a \in \mathbb{R}$ such that for any cycle $C \subset M$, $\int_C F \in a \mathbb{Z}$. Then there exists a $SO(2)$-principal bundle $\pi : P \to M$ with the characteristic class $[F] \in H^2(M;\mathbb{R})$ (recall that, for any connection $1$-form $\omega$ on $P$, the curvature $2$-form $\Omega$ is the pull-back of a closed 2-form $F$ on $M$, and the cohomology class of $F$ does not depend on the choice of the connection and is called the characteristic class of the bundle). In \cite{ShapiroIgoshinYakovlev}  the following fact is proved: if on $P$ we take the metric  
\begin{equation}
\bar g_p (\bar X,\bar Y) = g(d\pi \bar X,  d\pi \bar Y) + f(\pi(p)) \omega(\bar X) \omega(\bar Y),
\label{eq:3_208}
\end{equation}   
where $p \in P$, $\bar X, \bar Y \in T_p P$, then the geodesics of $(P,\bar g)$ are projected onto the solutions of the equation \eqref{eq:3_200}, that is $(P,\bar g,\pi)$ is a ``Riemannian geodesic model'' of this equation.   

Now consider our equation \eqref{eq:2_28}. The 2-form $F = K \sigma$ is the characteristic class of the bundle $SO(M,g)$, this means that in our case $P = SO(M,g)$. Also $1/f = K^2$, hence follows that \eqref{eq:3_208} turns into  
\begin{equation*}
\bar g_p (\bar X,\bar Y) = g(d\pi \bar X,  d\pi \bar Y) + \frac{1}{K^2}(\pi(p)) \omega(\bar X) \omega(\bar Y),
\label{eq:3_209}
\end{equation*}   
which proves that $\bar g = \hat g$. Hence we conclude that \emph{for a two-dimensional Riemannian manifold $(M,g)$ the orthonormal frame bundle $SO(M,g)$ with the Wagner lift $\hat g$ of $g$ is the Riemannian geodesic model in sense of \cite{ShapiroIgoshinYakovlev} for the equation \eqref{eq:2_28}}.

It is worth noting that in our case case $K$ possibly vanishes along a curve $\Sigma$, therefore the metric $\hat g = \bar g$ does not exist along the surface $\hat \Sigma$. The results of \cite{ShapiroIgoshinYakovlev} can be directly applied to the region $M \setminus \Sigma$. For example, Theorem~2 of this paper applied to our case implies that \emph{if $(M,g)$ is geodesically complete, then $(SO(M,g) \setminus \hat \Sigma,\hat g)$ is geodesically complete}.

Note also that, for a compact $M$, the action $\mathcal{L}$ is not multivalued only if $M = \mathbb{T}^2$. Certainly in this case the curvature $K$ has to change sign, so $\Sigma \ne \emptyset$.  

\subsection{Geodesics of the Wagner lift of the metric of a surface of revolution}
\label{subsec:4-3}
In this subsection we consider a surface of revolution $(M,g)$ and find three first integrals for the geodesic equations  of $(SO(M,g),\hat g)$. From this we get two first integrals for the differential equation \eqref{eq:2_28}. 

\subsubsection{First integrals of the system \eqref{eq:2_28} for surfaces of revolution}
Let $(M,g)$ be a surface of revolution, then we can take the coordinate system $(u^1,u^2)$ such that 
\begin{equation*}
g = A^2(u^2) du^1 \otimes du^1 + du^2\otimes du^2
\label{eq:3_300}
\end{equation*}   
Recall that the curvature of $(M,g)$ is
\begin{equation}
K(u^2) = -\frac{A''(u^2)}{A(u^2)}.
\label{eq:3_304}
\end{equation}
Hereafter the prime means derivative with respect to $u^2$. Note also that in this case $\Sigma$ is a disjoint union of parallels. 

We have the orthonormal frame field:
\begin{equation*}
e_1 = \frac{1}{A} \partial_1, \quad e_2 = \partial_2.
\label{eq:3_301}
\end{equation*}
The orthonormal frame field $\{e_1,e_2\}$ induces the coordinates $(u^1,u^2,\varphi)$ on 
the total space $SO(M)$ of the orthonormal frame bundle. 

The group $SO(2)$ acts on $M$ by rotations, we will denote by $r_\alpha$ the rotation at the angle $\alpha$. 
With respect to these coordinates, we have   
\begin{equation*}
r_\alpha(u^1,u^2) = (u^1+\alpha,u^2),
\label{eq:3_305}
\end{equation*}
and, evidently, the complete lift $r_\alpha^c$ of $r_\alpha$, which is an isometry of $(SO(M,g)\setminus \Sigma,\hat g)$ (see Theorem~\ref{prop:5_4}), is written with respect to the coordinates on $SO(M,g)$ as 
\begin{equation*}
r_\alpha^c(u^1,u^2,\varphi) = (u^1+\alpha,u^2,\varphi).
\label{eq:3_306}
\end{equation*}
Now  the structure group $SO(2)$ of the principal bundle $\pi : SO(M,g) \to M$ also acts on $SO(M,g)$ by isometries of $\hat g$, and this action is written with respect to these coordinates as 
\begin{equation*}
R_\beta(u^1,u^2,\varphi) = (u^1,u^2,\varphi+\beta).
\label{eq:3_307}
\end{equation*}
The infinitesimal isometries corresponding to the isometry flows $R_\beta$ and $r_\alpha$, respectively, are written in terms of the orthonormal frame $\mathcal{E}_1, \mathcal{E}_2, \mathcal{E}_3$ as follows  
\begin{equation*}
V_1 = \partial_\varphi = \frac{1}{K(u^2)}\mathcal{E}_3,
\quad 
V_2 = \partial_1 = A(u^2) \mathcal{E}_1 - \frac{A'(u^2)}{K(u^2)} \mathcal{E}_3.
\label{eq:3_308}
\end{equation*} 

It is well known that, \emph{if $W$ is an infinitesimal isometry of a Riemannian manifold $(N,h)$ and $\delta(t)$ is a geodesic of $(N,h)$, then $h_{\delta(t)}(\frac{d}{dt}\delta(t),W(\delta(t)))$ is constant} (this is a generalization of the famous Clairault theorem for geodesics on surfaces of revolution). 

If we apply this statement to $(SO(M,g) \setminus \Sigma,\hat g)$ and a geodesic $\hat\gamma(t)$ of $\hat g$ we get two first integrals of the geodesic equations.
As in Theorem~\ref{thm:5_2_0}, we set
$\frac{d}{dt}\hat\gamma(t) = Q^i(t) \mathcal{E}_i |_{\gamma(t)}$,
then we have that 
\begin{equation}
\hat{g}(V_1,\frac{d}{dt}\hat{\gamma}(t)) =  \frac{1}{K(u^2(t))} Q^3(t) = C_1,
\label{eq:3_310}
\end{equation}
and
\begin{equation}
\hat{g}(V_2,\frac{d}{dt}\hat{\gamma}(t)) = A(u^2(t)) Q^1(t) - \frac{A'(u^2(t))}{K(u^2(t))} Q^3(t) = C_2, 
\label{eq:3_311}
\end{equation}
where $C_1$, $C_2$ are constants.
Note that we already had the first equation (see Theorem~\ref{thm:5_3_0}, (1)).

\begin{theorem}
\label{thm:4_100}
Let $\hat \gamma : (a,b) \to SO(M,g)$ be a generalized geodesic of $(SO(M,g),\hat g)$, and $I_1 = \gamma^{-1}(M \setminus \Sigma)$. 
Let $\gamma = \pi \hat{\gamma}$, and $\frac{d}{dt}\hat\gamma(t) = Q^i(t) \mathcal{E}_i |_{\hat\gamma(t)}$,
Assume that $I_1$ is dense in $I$,
then
\begin{eqnarray}
Q^3(t) = C_1 K(\gamma(t)) = C_1 \frac{A''(\gamma(t))}{A(\gamma(t))}, 
\label{eq:3_315_1}
\\
A(\gamma(t))Q^1(t)-C_1 A'(\gamma(t)) = C_2, 
\label{eq:3_315_2}
\\
(Q^1(t))^2 + (Q^2(t))^2 + C_1^2 K(\gamma(t))^2 = C^2_3,
\label{eq:3_315_3}
\end{eqnarray}
where $C_1$, $C_2$, and $C_3$ are constants. 
\end{theorem}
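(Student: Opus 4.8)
The plan is to establish the three identities first on the open dense subset $I_1 \subset I$, where $\hat g$ is a genuine Riemannian metric, and then extend them to all of $I$ by continuity. The first two integrals are nothing but the Clairaut-type conservation law (stated immediately before the theorem) applied to the two infinitesimal isometries $V_1 = \partial_\varphi = \frac{1}{K}\mathcal{E}_3$ and $V_2 = \partial_1 = A\mathcal{E}_1 - \frac{A'}{K}\mathcal{E}_3$; these give exactly the quantities \eqref{eq:3_310} and \eqref{eq:3_311} already exhibited above. The third integral will come from the conservation of the speed $\hat g(\dot{\hat\gamma},\dot{\hat\gamma})$ of a geodesic.

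Concretely, on $I_1$ I would use that $\{\mathcal{E}_1,\mathcal{E}_2,\mathcal{E}_3\}$ is orthonormal for $\hat g$ (see \eqref{eq:4_18}). From $V_1 = \frac{1}{K}\mathcal{E}_3$ and $\dot{\hat\gamma} = Q^i\mathcal{E}_i$ one gets $\hat g(V_1,\dot{\hat\gamma}) = \frac{1}{K}Q^3 = C_1$, i.e. $Q^3 = C_1 K$, which is \eqref{eq:3_315_1}. Substituting this into $\hat g(V_2,\dot{\hat\gamma}) = A Q^1 - \frac{A'}{K}Q^3 = C_2$ yields $A Q^1 - C_1 A' = C_2$, i.e. \eqref{eq:3_315_2}. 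Finally, the speed $\hat g(\dot{\hat\gamma},\dot{\hat\gamma}) = (Q^1)^2 + (Q^2)^2 + (Q^3)^2$ is constant along any geodesic, and replacing $Q^3$ by $C_1 K$ gives \eqref{eq:3_315_3}. All of this holds verbatim on $I_1$ and requires nothing beyond the orthonormality of the frame and the two standard conservation laws.

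The remaining, and genuinely delicate, step is the passage from $I_1$ to $I$. Here I would observe that $Q^1$ and $Q^2$ are the components of $\dot{\hat\gamma}$ along the horizontal lifts $\mathcal{E}_1 = E^h_1$ and $\mathcal{E}_2 = E^h_2$, which are smooth over the whole coordinate neighborhood (including over $\Sigma$), so $Q^1$ and $Q^2$ are continuous on all of $I$. By contrast $Q^3$ is a priori defined only on $I_1$, since $\mathcal{E}_3 = K\partial_\varphi$ degenerates along $\hat\Sigma$; but the identity $Q^3 = C_1 K$ shows it extends continuously by the value $C_1 K(\gamma(t))$, which vanishes on $\hat\Sigma$ in agreement with Theorem~\ref{prop:4-1} (a generalized geodesic meets $\hat\Sigma$ horizontally, so the vertical component $Q^3 K$ of its velocity is zero there). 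Since both sides of each of \eqref{eq:3_315_1}--\eqref{eq:3_315_3} are then continuous on $I$ and coincide on the dense set $I_1$, they coincide on all of $I$.

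I expect the main obstacle to be precisely this extension argument: away from $I_1$ the metric $\hat g$, the frame component $Q^3$, and the isometries $V_1,V_2$ all break down, so the conservation laws cannot be invoked directly there. The density hypothesis on $I_1$, together with the horizontal-crossing property of Theorem~\ref{prop:4-1}, is exactly what is needed to push the integrals across $\hat\Sigma$; without it the three relations might hold only on the individual components of $I_1$, a priori with different constants on different components.
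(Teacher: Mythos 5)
Your proposal is correct and follows essentially the same route as the paper: on $I_1$ one reads off the two Clairaut-type integrals \eqref{eq:3_310}, \eqref{eq:3_311} for $V_1$ and $V_2$ together with the constancy of $\hat g(\dot{\hat\gamma},\dot{\hat\gamma})$, substitutes $Q^3=C_1K$, and then passes to all of $I$ by density. Your closing discussion of the continuity extension (and of why the constants do not jump between components of $I_1$) is in fact more explicit than the paper's one-line appeal to density.
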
 
\begin{proof}
From \eqref{eq:3_310} we get that $Q^3(t) = C_1 K(\gamma(t))$, then substituting this to \eqref{eq:3_311}, we arrive at \eqref{eq:3_315_2} for $t \in I_1$.  The last equation \eqref{eq:3_315_3} follows from the fact that any geodesic $\hat{\gamma}$ has the tangent vector of constant length, hence the function $(Q^1(t))^2 + (Q^2(t))^2  + (Q^3(t))^2$, $t \in I_1$, is constant. Then, using \eqref{eq:3_315_1}, we get \eqref{eq:3_315_3} for $t \in I_1$.  
Now since  $I_1$ is everywhere dense in $I$, the equations \eqref{eq:3_315_1}--\eqref{eq:3_315_2} hold on $I$. 
\end{proof}

From this theorem we get two first integrals for the differential equation \eqref{eq:2_28}.
\begin{corollary}
\label{cor:3_10}
Let $\gamma : (a,b) \to M$ be a solution of the equation \eqref{eq:2_28}, and $I_1 = \gamma^{-1}(M \setminus \Sigma)$. Let $\frac{d}{dt}\gamma(t) = Q_1(t)e_1 + Q^2(t)e_2$. 
Assume that $I_1$ is dense in $I$, then 
\begin{eqnarray}
A(\gamma(t))Q^1(t)-C A'(\gamma(t)) = C_2, 
\label{eq:3_316_1}
\\
(Q^1(t))^2 + (Q^2(t))^2 + C^2 K(\gamma(t))^2 = C^2_3,
\label{eq:3_316_2}
\end{eqnarray}
where $C$ is the constant of the equation \eqref{eq:2_28}, and $C_2$, $C_3$ are constants. 
\end{corollary}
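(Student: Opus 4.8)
The plan is to deduce Corollary~\ref{cor:3_10} directly from Theorem~\ref{thm:4_100} by lifting the solution $\gamma$ of \eqref{eq:2_28} to a generalized geodesic $\hat\gamma$ upstairs and reading off the first two integrals in a way that descends to $M$. First I would invoke Theorem~\ref{thm:3_3}: given the solution $\gamma:(a,b)\to M$ of \eqref{eq:2_28}, pick any $t_0\in I_1$ and any $p\in\pi^{-1}(\gamma(t_0))$, and form the curve $\hat\gamma(t)=\gamma^h(t)\,r(\alpha(t))$ constructed there, which is a generalized geodesic of $(SO(M,g),\hat g)$ with $\pi\hat\gamma=\gamma$. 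The key point from that construction (formula \eqref{eq:3_103} and the identity of the equations \eqref{eq:5_28}--\eqref{eq:5_30}) is that the horizontal components $Q^1,Q^2$ of $\frac{d}{dt}\hat\gamma$ in the frame $\{\mathcal{E}_i\}$ coincide with the components of $\frac{d}{dt}\gamma$ in the frame $\{e_1,e_2\}$, since $\mathcal{E}_1,\mathcal{E}_2$ are the horizontal lifts $E^h_1,E^h_2$ and $d\pi(\mathcal{E}_a)=e_a$. Thus the $Q^1,Q^2$ appearing in Theorem~\ref{thm:4_100} are exactly the $Q^1,Q^2$ of the statement of the corollary.

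Next I would identify the constant. The constant $C_1$ in Theorem~\ref{thm:4_100} comes from the first integral \eqref{eq:3_310}, namely $Q^3=C_1 K$; on the other hand, \eqref{eq:3_103} in the proof of Theorem~\ref{thm:3_3} gives $Q^3=C\,K(\gamma(t))$, where $C$ is precisely the constant of \eqref{eq:2_28}. Hence $C_1=C$. Substituting $C_1=C$ into \eqref{eq:3_315_2} yields \eqref{eq:3_316_1}, and substituting into \eqref{eq:3_315_3} yields \eqref{eq:3_316_2}. The density hypothesis on $I_1$ is inherited unchanged, so the relations, which a priori hold on $I_1$, extend by continuity to all of $I$ exactly as in the proof of Theorem~\ref{thm:4_100}.

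The only genuine content to verify is the matching of the $Q$-components under $d\pi$, together with the bookkeeping $C_1=C$; everything else is a transcription of Theorem~\ref{thm:4_100}. I expect the main (though minor) obstacle to be the potential $t_0$-dependence of the lift: Theorem~\ref{thm:3_3} produces $\hat\gamma$ through a chosen $p\in\pi^{-1}(\gamma(t_0))$, so I should note that the integrals \eqref{eq:3_316_1}--\eqref{eq:3_316_2} involve only the projected data $\gamma$, $Q^1$, $Q^2$, $K$, and the fixed constant $C$, all of which are independent of the choice of $p$; hence the conclusion is a statement about $\gamma$ alone and is unaffected by which lift we used. I would close by remarking that \eqref{eq:3_316_2} is simply the constancy of $\|\dot\gamma\|^2+C^2K^2$, the reduction of the upstairs unit-speed relation $(Q^1)^2+(Q^2)^2+(Q^3)^2=\mathrm{const}$ via $Q^3=CK$, which gives a clean independent check on the computation.
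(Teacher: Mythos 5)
Your proposal is correct and follows essentially the same route as the paper: lift $\gamma$ to the generalized geodesic of Theorem~\ref{thm:3_3}, observe that the horizontal components $Q^1,Q^2$ agree upstairs and downstairs and that $Q^3=CK$ forces $C_1=C$, and then read off \eqref{eq:3_316_1}--\eqref{eq:3_316_2} from Theorem~\ref{thm:4_100}. The extra remarks on independence of the choice of $p$ and on \eqref{eq:3_316_2} being the unit-speed relation are sound but not needed.
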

\begin{proof}
By Theorem~\ref{thm:3_3}, there exists a generalized geodesic $\hat\gamma(t)$, $t \in (a,b)$, such that $I_1 = \gamma^{-1}(M \setminus \Sigma)$ is dense in $(a,b)$. In addition, we have 
\begin{multline*}
\frac{d}{dt}\hat\gamma(t) = Q^1(t) \mathcal{E}_1 |_{\hat\gamma(t)} + Q^2(t) \mathcal{E}_2 |_{\hat\gamma(t)} 
+ Q^3(t) \mathcal{E}_3 |_{\hat\gamma(t)}
\\
\text{ and } 
\frac{d}{dt}\gamma(t) = Q^1(t) e_1 |_{\gamma(t)} + Q^2(t) e_2 |_{\gamma(t)}. 
\label{eq:3_360}
\end{multline*}
From the proof of Theorem~\ref{thm:3_3} it follows that the constant $C$ in the equation \eqref{eq:2_28} is equal to $C_1$. Now \eqref{eq:3_315_2} implies \eqref{eq:3_316_1}, and  \eqref{eq:3_315_3} implies \eqref{eq:3_316_2}.
\end{proof}

\begin{remark}
The equation \eqref{eq:3_316_1} is an analog of the Clairault theorem. 
\end{remark}

\begin{remark}
Corollary~\ref{cor:3_10} can be proved directly.  For example, if we multiply scalarwise equation \eqref{eq:2_28} by $\frac{d\gamma}{dt}$, and then integrate, we get exactly the equality  \eqref{eq:3_316_2} which holds true for all $t$. 
\end{remark}

From Corollary~\ref{cor:3_10} there follow some interesting properties of solutions of \eqref{eq:2_28}. 
From the proof of Theorem~\ref{thm:4_100} it follows that \eqref{eq:3_316_2} holds true without assumption that the metric $g$ admits an infinitesimal isometry. Then from \eqref{eq:3_316_2} immediately follows 

\begin{corollary}
\label{cor:4_5_2}
Let $\gamma : (a,b) \to M$ be a solution of \eqref{eq:2_28} with $C \ne 0$, and $t_0 \in (a,b)$.
Then $\gamma(t)$ lies in the region 
\begin{equation*}
\Omega = \left\{ p \in M \big| |K(p)| \le \sqrt{\frac{1}{C}||\frac{d\gamma}{dt}(t_0)|| + K^2(\gamma(t_0))} \right\}
\label{eq:4_200}
\end{equation*}
If, in addition,  $(M,g)$ is a surface of revolution, then $\Omega$ is a band  consisting of parallels of $M$.
\end{corollary}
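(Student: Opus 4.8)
The plan is to read the conclusion off the first integral \eqref{eq:3_316_2}, so essentially no new computation is needed. Recall that along any solution $\gamma$ of \eqref{eq:2_28} the quantity $(Q^1)^2+(Q^2)^2+C^2K(\gamma)^2$ is constant; this is exactly \eqref{eq:3_316_2}. Rather than relying on the generalized geodesic in $SO(M,g)$ and the Clairaut-type conservation law of Theorem~\ref{thm:4_100} (which a priori yields the identity only on the dense open set $I_1$, and only then extends by continuity), I would obtain it directly by taking the $g$-inner product of \eqref{eq:2_28} with $\dot\gamma$. Since $J$ is rotation by $\pi/2$ we have $g(J\dot\gamma,\dot\gamma)=0$, while $g(\operatorname{grad}K,\dot\gamma)=\dot\gamma(K)=\frac{d}{dt}K(\gamma(t))$ and $g(\nabla_{\dot\gamma}\dot\gamma,\dot\gamma)=\tfrac12\frac{d}{dt}\|\dot\gamma\|^2$. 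This gives $\frac{d}{dt}\bigl(\|\dot\gamma\|^2+C^2K(\gamma)^2\bigr)=0$, valid at every $t\in(a,b)$, including the points lying over $\Sigma$ where $K=0$.

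With the first integral $\|\dot\gamma(t)\|^2+C^2K(\gamma(t))^2=C_3^2$ in hand, and $C_3^2=\|\dot\gamma(t_0)\|^2+C^2K(\gamma(t_0))^2$ obtained by evaluating at $t_0$, the bound is immediate. Because $\|\dot\gamma(t)\|^2=(Q^1)^2+(Q^2)^2\ge 0$, we get $C^2K(\gamma(t))^2\le C_3^2$ for all $t$. Dividing by $C^2>0$ (this is where the hypothesis $C\ne 0$ enters) and taking square roots yields $|K(\gamma(t))|\le\sqrt{\tfrac{1}{C^2}\|\dot\gamma(t_0)\|^2+K(\gamma(t_0))^2}$, which is precisely the inequality defining $\Omega$. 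Hence $\gamma(t)\in\Omega$ for every $t\in(a,b)$.

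For the surface-of-revolution case I would invoke \eqref{eq:3_304}, which shows that $K=K(u^2)$ depends on the latitude coordinate $u^2$ alone. Consequently the condition $|K|\le\text{const}$ defining $\Omega$ is a condition on $u^2$ only; thus $\Omega$ is invariant under the rotations $u^1\mapsto u^1+\alpha$ and is a union of parallels, i.e.\ a band.

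I do not anticipate a genuine obstacle, as the statement is a direct consequence of the conservation law. The single point deserving care is that the first integral must hold on all of $(a,b)$ and not merely on $I_1$; this is exactly why I favor the direct integration argument, which is valid pointwise everywhere since the right-hand side $CKJ(\dot\gamma)-C^2K\operatorname{grad}K$ of \eqref{eq:2_28} is smooth even where $K$ vanishes, over the density-and-closure route through Theorem~\ref{thm:4_100}. A minor cosmetic discrepancy is that the radicand produced by the computation is $\tfrac{1}{C^2}\|\dot\gamma(t_0)\|^2$ rather than $\tfrac{1}{C}\|\dot\gamma(t_0)\|$; I would state $\Omega$ with the former.
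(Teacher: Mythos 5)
Your proof is correct and follows essentially the same route as the paper: the corollary is read off the first integral \eqref{eq:3_316_2}, and the remark preceding it in the paper already observes that this integral holds for all $t$ because it can be obtained by multiplying \eqref{eq:2_28} scalarwise by $\frac{d\gamma}{dt}$ and integrating, which is exactly your derivation. You are also right that the radicand in the stated definition of $\Omega$ should be $\frac{1}{C^2}\|\frac{d\gamma}{dt}(t_0)\|^2+K^2(\gamma(t_0))$; the printed $\frac{1}{C}\|\frac{d\gamma}{dt}(t_0)\|$ is a typo.
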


\subsubsection{Finding the solutions to \eqref{eq:2_28}}
\begin{figure}[h]
\label{fig:1}
\centerline{\includegraphics[scale=0.3]{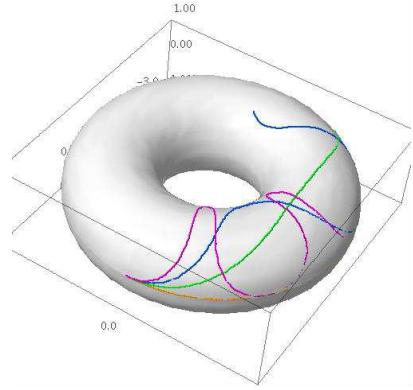}}
\caption{The solutions for \eqref{eq:2_28} with $C = 0, 1, 2, 3$. For $C=0$ we get the geodesic which is the equator of the torus.} 
\end{figure}

\begin{figure}[h]
\label{fig:2}
\centerline{\includegraphics[scale=0.35]{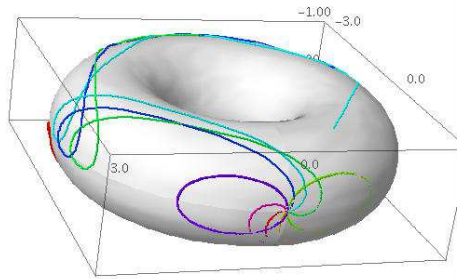}}
\caption{ The solutions for \eqref{eq:2_28} with $C = 1$ passing through a point in different directions.}
\end{figure}

\begin{figure}[h]
\label{fig:3}
\centerline{\includegraphics[scale=0.3]{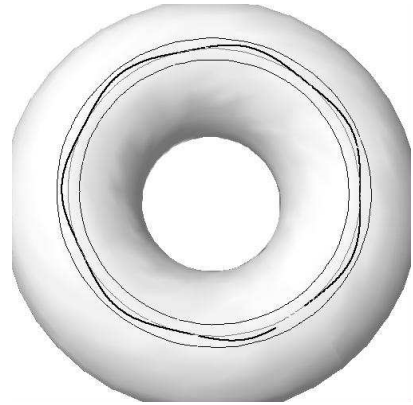}}
\caption{The solution for \eqref{eq:2_28} with $C=3$ which starts at a point of the torus parallel where $K=0$. This solution (the thick curve) cannot go out of the region given by the equation $K < 1/\sqrt{3}$ (bounded by the thin curves).}
\end{figure}
\begin{figure}[h]
\label{fig:4}
\centerline{\includegraphics[scale=0.3]{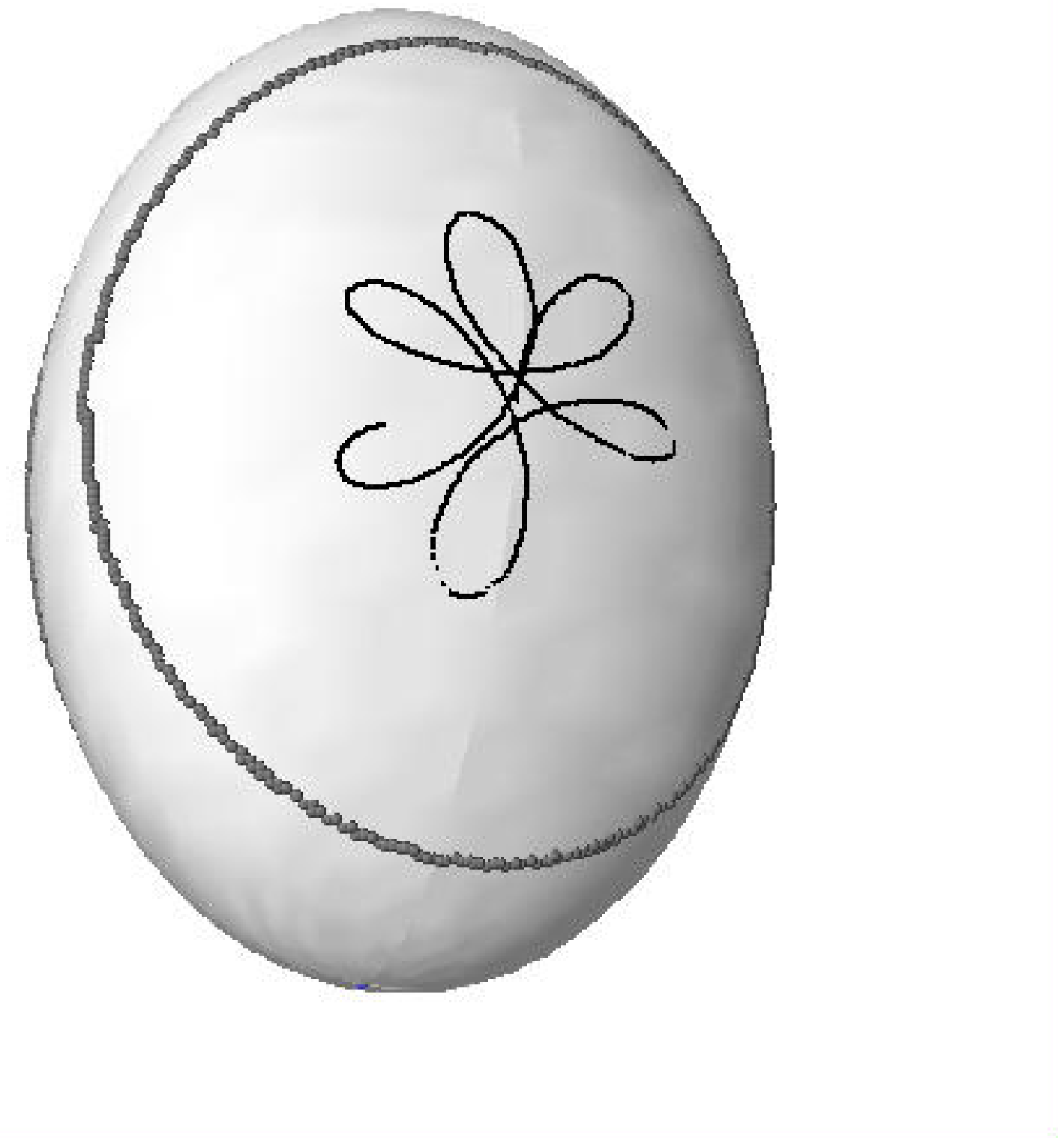}}
\caption{The solution for \eqref{eq:2_28} with $C=20$ on the triaxial ellipsoid (the lengths of axes are 1, 1.5, 2) which starts at a point of minimal curvature of ellipsoid.  This solution cannot go out of the region bounded by the thick gray curve.}
\end{figure}
We know that $Q^1 = ||\frac{d\gamma}{dt}|| \cos\alpha$, where $\alpha$ is the angle between the vector $\frac{d\gamma}{dt}$ and $e_1$, and $||\frac{d\gamma}{dt}|| = \sqrt{(Q^1(t))^2 + (Q^2(t))^2} = \sqrt{C^2_3 - C^2 K(\gamma(t))^2}$. Now from Corollary \ref{cor:3_10} and \eqref{eq:3_304} it follows that  
\begin{equation*}
\cos\alpha = \frac{C_2 + C A'}{A \sqrt{C^2_3 - C^2 K^2}} = \frac{C_2 + C A'}{\sqrt{C^2_3 A^2 - C^2 A''}}. 
\label{eq:3_365}
\end{equation*}
Then 
\begin{equation*}
\cot \alpha = \frac{C_2 + C A'}{\sqrt{C^2_3 A^2 - C^2 A'' - (C_2 + C A')^2}}.
\label{eq:3_370}
\end{equation*}
At the same time,
\begin{equation*}
\cot\alpha = A \frac{du^1}{du^2}.
\label{eq:3_380}
\end{equation*}
Hence follows that, if the solution $\gamma(t)$ is transversal to the parallels $u^1 = \text{const}$, then as a set of points it coincides with the graph of the function
\begin{equation}
u^1(u^2) = \int \frac{C_2 + C A'(u^2)}{A\sqrt{C^2_3 A^2(u^2) - C^2 A''(u^2) - (C_2 + C A'(u^2))^2}} du^2.
\label{eq:3_375}
\end{equation} 
 
However, \eqref{eq:3_375} helps only to find the curves $u^1 = u^1(u^2)$ on which the solutions lie and only in a neighborhood of the initial point. At the same time, for any metric $g$, the solutions of   \eqref{eq:2_28} can be found numerically if we write down the equation \eqref{eq:2_28} with respect to the local coordinates. 
For example, with the use of the computer system SAGE for symbolic and numerical calculations (www.sagemath.org), we can find the solutions of \eqref{eq:2_28} on the torus (see Fig.~1 and 2).    
Also, using numerically found solutions, we can illustrate Corollary~\ref{cor:4_5_2} (see Fig.~3 and~4).


\begin{thebibliography}{100}
\bibitem{Arteaga-Malakhaltsev}
Arteaga, J. R.; Malakhaltsev, M., \emph{Levantamiento de Wagner de una metrica}, Matem\'aticas: Ense\~nanza Universitaria, Vol.~18, N 1, 2010, 27--44.

\bibitem{Igoshin1} Igoshin, V. A.,
\emph{Pulverization modeling and point symmetries of pulverization}, Russian Math. (Iz. VUZ) 44 (2000), no. 5, 29--34. 

\bibitem{Igoshin2} Igoshin, V. A.,
\emph{Pulverization modeling and point-trajectory morphisms of quasigeodesic flows},  Russian Math. (Iz. VUZ) 44 (2000), no. 7, 9--19. 

\bibitem{ShapiroIgoshinYakovlev} Igoshin, V. A.;  Shapiro, Ya. A.; and Yakovlev, E. I., \emph{On one application of the geodesic modeling of differential equations of second order}, Mat. Zametki, 38, No. 3, 429�439 (1985).

\bibitem{KN} Kobayashi, S.; Nomizu, K., \emph{Foundations of Differential Geometry}, Vol I, John Wiley \& Sons, N.Y., 1963.

\bibitem{Montgomery} Montgomery, R., \emph{A tour of Subriemannian Geometries, Their Geodesics and Applications},  Mathematical Surveys and Monographs, V. 91, AMS, Providence. 2002.

\bibitem{BN} Neill, B., \emph{Submersions and geodesics}, Duke Math. J. 34, 1967, 363-373. 

\bibitem{Novikov}
Novikov, S. P., \emph{The Hamiltonian formalism and a many-valued analogue of Morse theory}, Russian Mathematical Surveys (1982),37(5):1

\bibitem{Shapukov1} Shapukov, B. N., \emph{Connections on fiber bundles}, Journal of Soviet Mathematics, 1985, 29:5, 1550-1571

\bibitem{Shapukov2} Shapukov, B. N., \emph{Lie derivatives on fiber manifolds}, Journal of Mathematical Sciences (New York), 2002, 108:2, 211-231

\bibitem{Wagner} Wagner, V.V., \emph{Differencialnaja geometrija negolonomnyh mnogoobrazij}. VIII mezhd. konkurs na soiskanie premii im. N. I. Lobachevskogo, 1937. [in Russian]

\bibitem{Yakovlev} 
Yakovlev, E. I.,  \emph{Fiber bundles and geometric structures associated with gyroscopic systems}, J. Math. Sci. (N. Y.) 153 (2008), no. 6, 828--855. 

\end{thebibliography}
\end{document}